\newcommand{\be}{\begin{equation}}
\newcommand{\ee}{\end{equation}}
\newcommand{\beq}{\begin{eqnarray}}
\newcommand{\eeq}{\end{eqnarray}}
\newtheorem{thm}{Theorem}[section]
\newtheorem{lma}{Lemma}[section]
\newtheorem{prop}{Proposition}[section]
\newtheorem{cor}{Corollary}[section]
\newtheorem{claim}{Claim}[section]
\theoremstyle{remark}
\newtheorem{rem}{Remark}[section]
\numberwithin{equation}{section}
\def\be{\begin{equation}}
\def\ee{\end{equation}}
\def\bee{\begin{equation*}}
\def\eee{\end{equation*}}
\newcommand{\de}{\partial}
\newcommand{\Tmax}{T_{\textrm{max}}}
\newcommand{\Ric}{\mathrm{Ric}}
\newcommand{\ve}{\varepsilon}
\newcommand{\Vol}{\mathrm{Vol}}
\def\Ric{\text{\rm Ric}}
\def\Rm{\text{\rm Rm}}
\def\e{\varepsilon}
\def\a{{\alpha}}
\begin{document}

\title[]
{Conformal tori with almost non-negative scalar curvature }

 \author{Jianchun Chu}
\address[Jianchun Chu]{Department of Mathematics, Northwestern University, 2033 Sheridan Road, Evanston, IL 60208}
\email{jianchun@math.northwestern.edu}

 \author{Man-Chun Lee}
\address[Man-Chun Lee]{Mathematics Institute, Zeeman Building,
University of Warwick, Coventry CV4 7AL; Department of Mathematics, Northwestern University, 2033 Sheridan Road, Evanston, IL 60208}
\email{Man.C.Lee@warwick.ac.uk, mclee@math.northwestern.edu}

\renewcommand{\subjclassname}{
  \textup{2010} Mathematics Subject Classification}
\subjclass[2010]{Primary 53C44
}

\date{\today}

\begin{abstract}
In this work, we consider sequence of metrics with almost non-negative scalar curvature on torus. We show that if the sequence is uniformly conformal to another sequence of metrics with better controlled geometry, then it converges to a flat metric in the volume preserving intrinsic flat sense,  $L^{p}$ sense and the measured Gromov-Hausdorff sense.
\end{abstract}

\keywords{scalar curvature, torus stability}

\maketitle

\markboth{ Jianchun Chu and Man-Chun Lee}{}

\section{Introduction}
In the study of Riemnnian geometry, the notion of curvature plays an significant role. As the average of sectional curvature, the scalar curvature is one of the simplest curvature invariants on a Riemannian manifold. In general, the scalar curvature can be regarded as a kind of weak measure of the local geometry. It is tempting to ask which manifolds can admit metric of positive scalar curvature. When the underlying manifold is torus $\mathbb{T}^n$, the Geroch Conjecture predicted that metrics with non-negative scalar curvature must be flat. The problem was solved by Schoen-Yau \cite{SchoenYau1979,SchoenYau1979-2} for $n\leq 7$ using minimal surface method  and Gromov-Lawson \cite{GromovLawson1980} for general $n$ using Atiyah-Singer index theorem for a twisted spinor bundle on a spin manifold.

In \cite{Gromov2014},  Gromov  conjectured a stability for the torus rigidity. Namely, a sequence of Riemannian manifolds with almost non-negative scalar curvature, which are diffeomorphic to tori, combined with appropriate compactness conditions should converge to a flat torus in some weak sense. In \cite{Sormani2017}, Sormani had formulated the conjecture more concretely using the notion of intrinsic flat distance which is a distance between integral current spaces and was introduced by Sormani-Wenger \cite{SormaniWenger2011}. This is believed to be the suitable notion for taking limits of manifolds with lower scalar curvature bounds, see also the recent work of the second named author, Naber and Neumayer in \cite{LeeNaberNeumayer2020} which suggested that the geodesic distance should be replaced by the $L^p$ version of distance function $d_p$ at least when $n>3$. For general dimension, it is unclear what conditions should serve as the non-collapsing assumption. While in $n=3$, Sormani \cite{Sormani2017} gave a precise prediction of the non-collapsing conditions called the MinA condition in order to avoid bubbling occurring.

The first result in this direction is given by Gromov in \cite{Gromov2014}  where if one assumes that a sequence of tori with almost non-negative scalar curvature converges in the $C^0$ sense to a $C^2$ metric, then one can show that the $C^2$ limit is a flat Riemannian metric. In \cite{Bamler2016}, Bamler gave an alternative proof using the Ricci flow to perform regularization. It was recently generalized by Burkhardt-Guim \cite{Burkhardt2019} to the case when the limiting metrics $g_\infty$ are only $C^0$ metric and it was shown that $g_\infty$ is isometric to the flat torus as a metric space. Further progresses toward the conjecture formulated by Sormani \cite{Sormani2017} has been made in various cases. In \cite{AHPPS2019}, Allen, Hernandez-Vazquez, Parise, Payne, and Wang studied the warped product case. In \cite{CAP2020}, Cabrera Pacheco, Ketterer, and Perales studied the case of graphical tori. In \cite{Allen2020}, Allen studied the case when the sequence is conformal to the flat tori. In \cite{LeeNaberNeumayer2020}, the second named author with Naber and Neumayer considered the case when the entropy is small and proved the stability in the sense of $d_p$ convergence.

Motivated by the work of Allen in \cite{Allen2020}, we study the case when the metrics are uniformly conformal to some metrics with better controlled geometry. We show that under the natural non-collapsing assumption, if in addition the conformal factor is bounded in $L^{p_{0}}$ for some sufficiently large $p_{0}$, then the sequence will converge to a flat metric in the volume preserving intrinsic flat sense.

\begin{thm}\label{Sec1: maintheorem}
Let $(M_{i},g_{i})$ be a sequence of $n$-dimensional Riemannian manifolds ($n>2$). Suppose $M_{i}$ is diffeomorphic to $\mathbb{T}^n$ and $g_i=u_i^\frac4{n-2}h_i$ for some metric $h_i$ such that
\begin{enumerate}\setlength{\itemsep}{1mm}
\item[(i)]  $|\Ric(h_i)|_{h_i}+(\mathrm{inj}(M_i,h_i))^{-1}+\mathrm{diam}(M_i,h_i)\leq \Lambda$;
\item[(ii)] $\mathrm{diam}(M_i,g_i)+\|u_i\|_{L^{p_0}(M_i,h_i)}\leq \Lambda $;
\item[(iii)] $\mathrm{Vol}(M_i,g_i)\geq \Lambda^{-1}$;
\item[(iv)] $R(g_i)\geq -\delta_i$
\end{enumerate}
for some $\Lambda>0$, $p_0>\frac{n}{2}+\frac{2n(n+4)}{(n-2)(n+2)}$ and $\delta_i\rightarrow 0$. Then after passing to a subsequence, $(M_i,g_i)$ converges to a flat torus in the volume preserving intrinsic flat sense.
\end{thm}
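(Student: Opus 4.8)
The plan is to pass, via Cheeger--Gromov compactness, to a fixed torus on which only the conformal factors $u_i$ vary, to control these by elliptic estimates fed by the scalar curvature inequality and the $L^{p_0}$ bound, and then to identify the limiting conformal metric using the rigidity of the torus together with a known comparison between $L^p$-convergence of metrics and volume preserving intrinsic flat convergence. \emph{Step 1.} Assumption (i) provides $|\Ric(h_i)|_{h_i}\le\Lambda$, $\mathrm{inj}(M_i,h_i)\ge\Lambda^{-1}$ and $\diam(M_i,h_i)\le\Lambda$, so by the Anderson--Cheeger--Gromov compactness theorem, after passing to a subsequence there are diffeomorphisms $\Phi_i:\mathbb{T}^n\to M_i$ with $\Phi_i^{\ast}h_i\to h_\infty$ in $C^{1,\alpha}\cap W^{2,p}$ for all $p<\infty$, where $h_\infty$ is a Riemannian metric on $\mathbb{T}^n$; in particular $\Vol(\mathbb{T}^n,h_i)$ is pinched between two positive constants, $\|R(h_i)\|_{L^\infty}$ is bounded, and a uniform Sobolev inequality holds for $(\mathbb{T}^n,h_i)$. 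Replacing $g_i$ by $\Phi_i^{\ast}g_i$, we work on $\mathbb{T}^n$ with $g_i=u_i^{4/(n-2)}h_i$ and $h_i\to h_\infty$.

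\emph{Step 2 (a priori estimates for $u_i$).} Writing $a_n=\tfrac{4(n-1)}{n-2}$, the conformal change formula turns (iv) into the Yamabe-type differential inequality $-a_n\Delta_{h_i}u_i+R(h_i)u_i=R(g_i)u_i^{(n+2)/(n-2)}\ge-\delta_i u_i^{(n+2)/(n-2)}$, that is,
\[
\Delta_{h_i}u_i\le\frac1{a_n}\Big(R(h_i)u_i+\delta_i u_i^{(n+2)/(n-2)}\Big)=:f_i .
\]
By (ii) and Step 1, $\|f_i\|_{L^q(\mathbb{T}^n,h_i)}$ is uniformly bounded with $q=\tfrac{p_0(n-2)}{n+2}>\tfrac n2$ --- this is the first place the lower bound on $p_0$ is used --- so the De Giorgi--Nash--Moser local maximum estimate applied to the subsolution $u_i$ yields a uniform bound $\|u_i\|_{L^\infty(\mathbb{T}^n)}\le C(\Lambda,n)$. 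Testing the equation against $u_i^{-1}$ gives
\[
a_n\int_{\mathbb{T}^n}|\nabla\log u_i|_{h_i}^2\,dV_{h_i}=\int_{\mathbb{T}^n}R(h_i)\,dV_{h_i}-\int_{\mathbb{T}^n}R(g_i)u_i^{4/(n-2)}\,dV_{h_i}\le C(\Lambda,n),
\]
using $R(g_i)\ge-\delta_i$ and (ii). Since (iii) gives $\int u_i^{2n/(n-2)}\,dV_{h_i}=\Vol(M_i,g_i)\ge\Lambda^{-1}$ while $u_i\le C$, the superlevel set $\{u_i\ge c\}$ has definite $h_i$-measure; combined with the gradient bound and a Poincar\'e inequality, $\log u_i$ is bounded in $W^{1,2}(\mathbb{T}^n,h_i)$, hence $\nabla u_i=u_i\nabla\log u_i$ is bounded in $L^2$ and $u_i^{4/(n-2)}$ is uniformly integrable to a high power.

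\emph{Step 3 (limit and identification).} Passing to a further subsequence, $u_i\to u_\infty$ a.e.\ and in every $L^p$, $p<\infty$, with $0<u_\infty\in W^{1,2}\cap L^\infty\cap L^{p_0}$ and $\|u_\infty\|_{L^{2n/(n-2)}}\ge c>0$; passing to the limit in the weak formulation of the equation gives $-a_n\Delta_{h_\infty}u_\infty+R(h_\infty)u_\infty=\mu$ for a nonnegative Radon measure $\mu$ (the defect measure), and the weak Harnack inequality for supersolutions forces $\operatorname{essinf}_{\mathbb{T}^n}u_\infty>0$. The crucial claim is $\mu=0$: a nonzero $\mu$ would force the $u_i$ to concentrate along a rescaled standard Yamabe bubble, whose $L^{p_0}$ norm blows up under the rescaling precisely because $p_0>\tfrac{2n}{n-2}$, contradicting (ii); the remaining diffuse part of $\mu$ is excluded by combining the scalar curvature sign with the $L^{p_0}$ bound on $u_\infty$ through elliptic (Green's function) estimates. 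Hence $u_\infty$ solves the linear equation $-a_n\Delta_{h_\infty}u_\infty+R(h_\infty)u_\infty=0$, so by elliptic regularity it is $C^{1,\alpha}$ and positive, and $g_\infty:=u_\infty^{4/(n-2)}h_\infty$ is a genuine metric on $\mathbb{T}^n$ with $R(g_\infty)\equiv0$. A scalar-flat metric on $\mathbb{T}^n$ is flat (Schoen--Yau, Gromov--Lawson; one may alternatively invoke the $C^0$ statement of Burkhardt--Guim), so $(\mathbb{T}^n,g_\infty)$ is a flat torus.

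\emph{Step 4 and the main difficulty.} Since $u_i\to u_\infty$ in every $L^p$ and $h_i\to h_\infty$ in $C^0$, the metric tensors satisfy $g_i\to g_\infty$ in every $L^s$, while $\Vol(M_i,g_i)=\int u_i^{2n/(n-2)}\,dV_{h_i}\to\int u_\infty^{2n/(n-2)}\,dV_{h_\infty}=\Vol(\mathbb{T}^n,g_\infty)$ by the uniform integrability from Step 2 (again using $p_0>\tfrac{2n}{n-2}$); feeding this, together with the upper bound $g_i\le C h_\infty$ and the non-collapsing from (iii), into the comparison between $L^p$-convergence of Riemannian metrics and volume preserving intrinsic flat convergence shows that $(M_i,g_i)$ converges to the flat torus $(\mathbb{T}^n,g_\infty)$ in the volume preserving intrinsic flat sense. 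The heart of the argument, and the main obstacle, is Step 3 --- specifically the vanishing of the defect measure $\mu$, i.e.\ ruling out bubbling/concentration of the conformal factors; the $L^{p_0}$ hypothesis with the stated threshold is tailored exactly to this, since a concentrating bubble is detected by a supercritical Lebesgue norm and the precise value of $p_0$ is what makes the elliptic estimates in Steps 2--3 close up. A secondary, purely technical issue is the bookkeeping around the borderline nonlinear term $\delta_i u_i^{(n+2)/(n-2)}$ in Step 2 and the verification of the hypotheses of the $L^p$-to-intrinsic-flat comparison in Step 4, given that $g_i$ may degenerate on small sets where $u_i$ is small.
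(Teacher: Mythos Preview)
Your Step 2 contains a sign error that undermines the whole elliptic strategy. From $R(g_i)\ge-\delta_i$ you correctly deduce
\[
\Delta_{h_i}u_i\le\tfrac1{a_n}\big(R(h_i)u_i+\delta_i u_i^{(n+2)/(n-2)}\big)=:f_i,
\]
but this is the \emph{supersolution} inequality for $-\Delta_{h_i}$; the De Giorgi--Nash--Moser local maximum estimate needs the subsolution direction $-\Delta_{h_i}u_i\le f_i$. What you have yields, via weak Harnack, a uniform \emph{lower} bound $\inf_{M}u_i\ge c>0$ (and the paper indeed proves exactly this in Lemma~\ref{Sec2: low}), not an upper bound. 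There is no way to close this: hypotheses (i)--(iv) impose nothing on $R(g_i)$ from above, so at a point where $R(g_i)$ is large and positive the equation forces $-\Delta_{h_i}u_i$ to be large, and $u_i$ can spike. This is precisely why the paper states Theorem~\ref{Sec1: maintheorem2} separately under the extra assumption $\sup u_i\le\Lambda$; that bound does \emph{not} follow from the $L^{p_0}$ hypothesis. With Step~2 gone, your Steps~3--4 (the $L^p$ convergence of $u_i$, the bound $g_i\le Ch_\infty$, and the $L^p$-to-intrinsic-flat comparison) have no input.

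The paper's route is designed around this obstruction. It runs the Yamabe flow $g_i(t)$ from $g_{i,0}$ and obtains the upper bound only at positive times, $u_i(t)\le Ct^{-\alpha}$, via parabolic Moser iteration (this is where the threshold $p_0>\tfrac n2+\tfrac{2n(n+4)}{(n-2)(n+2)}$ enters, not in an elliptic estimate). Combined with the elliptic lower bound, $g_i(T_0)$ is uniformly controlled and, after a further Ricci-flow regularization and the torus rigidity, converges in $C^0$ to a flat metric $g_\infty$. The key monotonicity $\partial_t g=-Rg\le\delta_i g$ then gives the one-sided comparison $g_{i,0}\ge e^{-\delta_iT_0}g_i(T_0)\ge(1-\tfrac1i)g_\infty$, and the flow estimates give $\Vol(M,g_{i,0})\to\Vol(M,g_\infty)$. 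These two ingredients are exactly the hypotheses of the Allen--Perales--Sormani ``volume above, distance below'' theorem, which yields volume preserving intrinsic flat convergence without ever establishing $L^p$ convergence of $g_{i,0}$ or an $L^\infty$ bound on $u_i$.
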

\begin{rem}
By the H\"older inequality, condition (ii) and (iii) will imply a lower bound of $\mathrm{Vol}(M_i,h_i)$. In this case, the lower bound of injectivity radius of $h_i$ will follow from the work of Cheeger-Gromov-Taylor \cite{CheegerGromovTaylor1982} if we strengthen the curvature bound of $h_i$ from $\Ric(h_i)$ to $\Rm(h_i)$.
\end{rem}

In Theorem~\ref{Sec1: maintheorem}, $h_i$ serves as a sequence of well-behaved reference metrics within the conformal classes. In certain sense, we show that if the metric within the conformal has almost non-negative scalar curvature, then it is almost the solution to the Yamabe problem. The volume condition (iii) is necessary in order to prevent collapsing. Condition (ii) is stronger than volume non-expanding which plays the role to rule out bubbling. If we strengthen condition (ii) from $L^{p_{0}}$ to $L^\infty$, better convergence can be obtained. Note that in this case, the upper bound of $\mathrm{diam}(M_i,g_i)$ follows from condition (i).

\begin{thm}\label{Sec1: maintheorem2}
Let $(M_{i},g_{i})$ be a sequence of $n$-dimensional Riemannian manifolds ($n>2$). Suppose $M_{i}$ is diffeomorphic to $\mathbb{T}^n$ and $g_i=u_i^\frac4{n-2}h_i$ for some metric $h_i$ such that
\begin{enumerate}\setlength{\itemsep}{1mm}
\item[(i)]  $|\Ric(h_i)|_{h_i}+(\mathrm{inj}(M_i,h_i))^{-1}+\mathrm{diam}(M_i,h_i)\leq \Lambda$;
\item[(ii)] $ \sup_{M_i}u_i\leq\Lambda$;
\item[(iii)] $\mathrm{Vol}(M_i,g_i)\geq \Lambda^{-1}$;
\item[(iv)] $R(g_i)\geq -\delta_i$
\end{enumerate}
for some $\Lambda>0$ and $\delta_i\to 0$. Then after passing to a subsequence, $(M_i,g_i)$ converges to a flat torus in $L^p$ for all $p>0$ modulo diffeomorphism. Moreover, $(M_i,g_i)$ converges to a flat torus in the measured Gromov-Hausdorff sense.
\end{thm}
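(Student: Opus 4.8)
The plan is to translate the scalar curvature hypothesis into an elliptic inequality for the conformal factors $u_i$ with respect to the reference metrics $h_i$, to extract a limit using the controlled geometry of the $h_i$, and then to invoke the rigidity of the torus on that limit.

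\textbf{Step 1 (the conformal PDE and a priori bounds on $u_i$).} Under $g_i=u_i^{4/(n-2)}h_i$ the scalar curvature transforms as $-c_n\Delta_{h_i}u_i+R(h_i)u_i=R(g_i)u_i^{(n+2)/(n-2)}$ with $c_n=4(n-1)/(n-2)$, so (iv) gives $-c_n\Delta_{h_i}u_i+R(h_i)u_i\ge-\delta_i u_i^{(n+2)/(n-2)}$. Since $|R(h_i)|\le n\Lambda$ by (i) and $\sup u_i\le\Lambda$ by (ii), this forces $\Delta_{h_i}u_i\le C$ and, rewriting it as $\Delta_{h_i}u_i\le C'u_i$, exhibits $u_i$ as a nonnegative supersolution of a uniformly elliptic operator; the De Giorgi--Nash--Moser weak Harnack inequality (with constants uniform thanks to the Ricci and injectivity bounds in (i)) chained along a bounded chain of unit balls covering $(M_i,h_i)$ gives $\inf_{M_i}u_i\ge c\,\big(\fint_{M_i}u_i^{q}\,dV_{h_i}\big)^{1/q}$ for a small $q>0$. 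Because $\int_{M_i}u_i^{2n/(n-2)}\,dV_{h_i}=\Vol(M_i,g_i)\ge\Lambda^{-1}$ by (iii), $u_i\le\Lambda$, and $\Vol(M_i,h_i)$ is bounded above (Bishop--Gromov), the right hand side is bounded below, so $0<c\le u_i\le\Lambda$ uniformly. Integrating the conformal identity over $M_i$ and using $\int_{M_i}\Delta_{h_i}u_i\,dV_{h_i}=0$ yields $\int_{M_i}R(g_i)^+\,dV_{g_i}\le C$, and testing it against $u_i$ yields $c_n\int_{M_i}|\nabla u_i|_{h_i}^2\,dV_{h_i}=\int_{M_i}R(g_i)\,dV_{g_i}-\int_{M_i} R(h_i)u_i^2\,dV_{h_i}\le C$.

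\textbf{Step 2 (compactness).} By (i) and Anderson's convergence theorem, after passing to a subsequence there are diffeomorphisms $\phi_i:\mathbb{T}^n\to M_i$ with $\phi_i^*h_i\to h_\infty$ in $C^{1,\alpha}$ and weakly in $W^{2,p}$ for all $p<\infty$, where $h_\infty$ is a metric on $\mathbb{T}^n$; since the convergence is $C^{1,\alpha}$, $\mathbb{T}^n$ is diffeomorphic to the limit, as it must be. Writing $u_i$ for $\phi_i^*u_i$, the bounds $c\le u_i\le\Lambda$ and $\int|\nabla u_i|_{h_i}^2\le C$ give, along a further subsequence, $u_i\to u_\infty$ weakly in $W^{1,2}$ and (being uniformly bounded) strongly in $L^p$ for every $p<\infty$, with $c\le u_\infty\le\Lambda$. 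Put $g_\infty:=u_\infty^{4/(n-2)}h_\infty$; then $\phi_i^*g_i\to g_\infty$ in $L^p$ for every $p<\infty$. On any region where $|R(g_i)|\le K$ the conformal PDE has an $L^\infty$-bounded right hand side, so interior $W^{2,p}$ and $C^{1,\beta}$ estimates upgrade the convergence of $u_i$ there; since $\Vol_{g_i}\{R(g_i)>K\}\le C/K$ by the $L^1$ bound of Step 1, this improved convergence holds off a set of arbitrarily small volume.

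\textbf{Step 3 (rigidity of the limit and the two modes of convergence).} The goal is to show $g_\infty$ is flat. Passing $R(g_i)\ge-\delta_i\to0$ to the limit --- on the good regions, using the upgraded convergence of Step 2, and absorbing the shrinking bad sets --- one obtains $R(g_\infty)\ge0$ in the weak (distributional, $C^0$-limit) sense on $\mathbb{T}^n$. By the torus rigidity theorem --- Schoen--Yau \cite{SchoenYau1979,SchoenYau1979-2} and Gromov--Lawson \cite{GromovLawson1980} in the smooth case, and Burkhardt-Guim \cite{Burkhardt2019} in the limiting/low-regularity case --- $g_\infty$ is then a flat metric on $\mathbb{T}^n$, which gives the $L^p$ statement. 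For the measured Gromov--Hausdorff convergence, note that $c^{4/(n-2)}h_i\le g_i\le\Lambda^{4/(n-2)}h_i$ makes the identity maps between $(M_i,d_{h_i})$ and $(M_i,d_{g_i})$ uniformly bi-Lipschitz; since $(M_i,h_i)\to(\mathbb{T}^n,h_\infty)$ in the measured Gromov--Hausdorff sense by Step 2, $(M_i,g_i)$ subconverges, the limit length structure is identified with $d_{g_\infty}$ using the convergence of $u_i$, and $dV_{g_i}=u_i^{2n/(n-2)}dV_{h_i}$ converges weakly to $dV_{g_\infty}$, yielding measured Gromov--Hausdorff convergence to the flat torus $(\mathbb{T}^n,g_\infty)$.

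\textbf{Main obstacle.} The crux is Step 3, and the part of Step 2 it rests on. The torus topology enters only through the non-existence of positive-scalar-curvature metrics on $\mathbb{T}^n$ --- equivalently, the non-positivity of the Yamabe invariant of every conformal class --- and this supplies only the borderline $L^1$ control $\int R(g_i)^+\,dV_{g_i}\le C$ on the positive part of the scalar curvature. Bridging this, via the good-set/bad-set decomposition, to convergence of the $u_i$ strong enough that $g_\infty$ carries a well-defined weak scalar curvature to which a rigidity theorem applies --- in particular, controlling $g_\infty$ across the vanishing bad sets and making the passage $R(g_i)\ge-\delta_i\Rightarrow R(g_\infty)\ge0$ rigorous --- is where the real work lies.
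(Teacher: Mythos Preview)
Your approach differs substantially from the paper's. You attack the elliptic conformal PDE directly and attempt to pass the scalar-curvature lower bound to a limit metric, while the paper regularizes parabolically: it runs the Yamabe flow on each $g_i$ to a fixed time $T_0$, then the Ricci flow from $g_i(T_0)$, producing smooth Cheeger--Gromov convergence to a metric $g_\infty$ with $R_{g_\infty}\ge 0$, hence flat by the classical torus theorem; the $L^p$ closeness of $g_i(0)$ to $g_i(T_0)$ is controlled separately along the flow. Your Steps~1--2 are essentially correct, and your Harnack-based lower bound on $u_i$ is in fact simpler than the paper's corresponding argument, since you have $u_i\in L^\infty$ rather than merely $u_i\in L^{p_0}$.

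The obstacle you flag in Step~3 is genuine and is not closed by the good-set/bad-set sketch. From Step~2 you obtain only $u_\infty\in W^{1,2}\cap L^\infty$ and $h_\infty\in C^{1,\alpha}\cap W^{2,p}$, so $g_\infty=u_\infty^{4/(n-2)}h_\infty$ is a priori merely an $L^\infty$ tensor, not a $C^0$ Riemannian metric. Burkhardt-Guim's rigidity \cite{Burkhardt2019} requires a $C^0$ metric arising as a $C^0$ limit of smooth metrics satisfying the scalar bound; you have only $L^p$ convergence. Your bad sets $\{R(g_i)>K\}$ have small \emph{volume}, but that imposes no constraint on their geometry or accumulation, and nothing rules out $u_\infty$ being discontinuous on a dense set. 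The weak inequality $-c_n\Delta_{h_\infty}u_\infty+R(h_\infty)u_\infty\ge 0$ does survive the limit, but pairing it with the first eigenfunction of the conformal Laplacian only forces the Yamabe constant of $[h_\infty]$ to vanish --- it neither identifies $u_\infty$ with that eigenfunction nor supplies a rigidity statement for $g_\infty$ at this regularity. Your measured Gromov--Hausdorff step has the same defect: identifying the limiting length structure with $d_{g_\infty}$ and invoking \cite{AllenSormani2020} both presuppose that $g_\infty$ is at least continuous. The paper's flow-based regularization is precisely the device that manufactures the missing smoothness of the limit.
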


\begin{rem}
It will  be clear from the proof that the torus structure doesn't play any important role. Indeed, the same stability will be true if $\mathbb{T}^n$ is replaced by any closed manifolds $M^n$ with non-positive Yamabe invariant. We refer readers to Section~\ref{stability-T} for the detailed statement.
\end{rem}

The main technique is motivated by that of \cite{Bamler2016,Burkhardt2019,ShiTam2018} which used the Ricci flow to regularize the metrics using the estimates from \cite{KochLamm2012, Simon2002}. In our case, we do not a-priori assume any convergence of $g_i$ or $C^0$ closeness of metrics and therefore Ricci flow's method do not apply directly. Instead,  we make use of the Yamabe flow which is a geometric heat flow evolving inside the conformal class to (partially) regularize the metric $g_i$. Unlike the Ricci flow, its regularization ability is  relatively limited and highly depends on the uniform geometry of $h_i$. In our case, although the corresponding Yamabe flow $g_i(t)$ is not uniformly regular in $C^\infty$, we are able to show that it converges to a fixed flat metric away from $t=0$ weakly modulo diffeomorphism. This reduces the problem to establishing uniform weak convergence of $g_i(t)$ as $t\rightarrow 0$. This will be done in Section~\ref{stability-T}.

{\it Acknowledgement}:
We would like to thank Brian Allen and Shengwen Wang for useful communication. We would also like to thank Davi Maximo for pointing out a discrepancy in the earlier version of the paper. M.-C. Lee was partially supported by NSF grant DMS-1709894 and EPSRC  grant number P/T019824/1.

\section{A-priori estimates along the Yamabe flow}\label{estimateYamaflow}
In the following, we will consider a Riemannian manifold $(M,h)$ which satisfies
\begin{equation*}
{\bf (A)}:\quad\;\;
\left\{
\begin{array}{ll}
|\Ric(h)|_{h} \leq \Lambda;\\[1mm]
\mathrm{diam}(M,h)\leq \Lambda;\\[1mm]
\mathrm{inj}(M,h)\geq \Lambda^{-1}
\end{array}
\right.
\end{equation*}
for some $\Lambda>0$. For later use, here we recall the Sobolev inequality and the Poincar\'e inequality: for $f\in W^{1,2}(M)$,
\begin{equation}
\left(\int_{M}|f|^{\frac{2n}{n-2}}d\mu_{h}\right)^{\frac{n-2}{n}}
\leq C_{S}\int_{M}|\nabla^{h}f|_{h}^{2}d\mu_{h}+C_{S}\int_{M}f^{2}d\mu_{h},
\end{equation}
and
\begin{equation}
\int_{M}|f-\underline{f}|^{2}d\mu_{h}
\leq C_{P}\int_{M}|\nabla^{h}f|_{h}^{2}d\mu_{h}, \quad \underline{f} = \frac{1}{\Vol(M,h)}\int_{M}fd\mu_{h}.
\end{equation}
Note that the lower bound of injective radius implies the lower bound of volume (see \cite{BergerKazdan80}), and then constants $C_{S}$ (see \cite{Ilias83}) and $C_{P}$ (see \cite{LiYau80}) depend only on $n$ and $\Lambda$.

Let $g_0=u_0^\frac4{n-2}h$ be a metric inside the conformal class of $h$. Suppose that
\begin{equation*}
{\bf (B)}:\quad\;\; \left\{
\begin{array}{ll}
\|u_0\|_{L^{p_0}(M,h)}\leq \Lambda;\\[1.5mm]
\mathrm{Vol}(M,g_{0})\geq \Lambda^{-1};\\[1mm]
R(g_0)\geq -\delta\geq -1
\end{array}
\right.
\end{equation*}
for some $p_0,\Lambda,\delta>0$. In this section, we will use the Yamabe flow to regularize $g_0$ slightly. This is the family of metric $g(t)$ which solves
\begin{equation}\label{Yamabe flow}
\left\{
\begin{array}{ll}
\frac{\de g}{\de t} = -R(g) g,\\[1mm]
g(0) = g_0.
\end{array}
\right.
\end{equation}
Equivalently, if we write $g(t)=u(t)^{\frac{4}{n-2}}h$, then the function $u(t)$ solves
\begin{equation}\label{evo-u}
\begin{cases}
\ \frac{\de u^N}{\de t} = \frac{n+2}{4}\left(\frac{4(n-1)}{n-2}\Delta_{h}u-R_{h}u \right), \\
\ u(0) = u_{0},
\end{cases}
\end{equation}
where $N=\frac{n+2}{n-2}$ and $R_{h}$ denotes the scalar curvature of $h$. We will establish a-priori estimates of $g(t)$ or equivalently $u(t)$ along the Yamabe flow.

\subsection{Lower bound of the Yamabe flow}
We first show that if $p_{0}$ is sufficiently large, then assumption {\bf (B)} implies the positive lower bound of $u_{0}$. We begin with the following lemma, which is a global version of \cite[(4.8)]{HanLin1997}.
\begin{lma}\label{Sec2: inequality}
If $p_{0}>\frac{2n}{n-2}$, then there is a constant $\ve_{0}(n,\Lambda,p_{0})>0$ such that
\begin{equation}
\left(\int_{M}u_{0}^{\ve_{0}}d\mu_{h}\right)\left(\int_{M}u_{0}^{-\ve_{0}}d\mu_{h}\right) \leq 4e^{2}.
\end{equation}
\end{lma}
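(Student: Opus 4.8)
The inequality we need is a reverse-Hölder/John–Nirenberg type estimate, and the natural route is to show that $\log u_0$ is controlled in $L^1$ (indeed has small BMO-type oscillation after scaling) and then expand the exponentials. First I would recall why $u_0$ makes sense as a positive function: from {\bf (B)}, the scalar curvature equation $\frac{4(n-1)}{n-2}\Delta_h u_0 - R_h u_0 = -R(g_0) u_0^{N} \leq \delta u_0^N$ holds weakly, so $u_0$ is a nonnegative subsolution of a linear-type elliptic equation (with the nonlinear term on the right controlled by $\|u_0\|_{L^{p_0}}$ via Sobolev when $p_0 > \tfrac{2n}{n-2}$), hence by De Giorgi–Nash–Moser $u_0 > 0$ everywhere. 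The key point for the present lemma, however, is a Harnack-type estimate for $\log u_0$.

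**Key steps.** I would argue as follows. Set $w = \log u_0$; I claim $w$ satisfies a differential inequality of the form $\Delta_h w \geq -|\nabla^h w|^2 - C$ (and also $\leq$ with the nonlinear term), obtained by dividing the equation for $u_0$ by $u_0$ and using $\Delta_h \log u_0 = u_0^{-1}\Delta_h u_0 - |\nabla^h u_0|^2 u_0^{-2}$; here the constant $C$ absorbs $\|R_h\|_{L^\infty} \leq C(n,\Lambda)$ and, crucially, the $L^{p_0/2}$-norm of $u_0^{N-1}$, which is bounded once $p_0 > \tfrac{2n}{n-2}$ (so that $N-1 = \tfrac{4}{n-2} < \tfrac{p_0}{?}$—one checks $(N-1)\cdot\text{something} \le p_0$). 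Then the standard Moser iteration / Bombieri–Giusti argument (this is exactly the global analogue of \cite[(4.8)]{HanLin1997}) gives a bound $\|w - \underline{w}\|_{L^1(M,h)} \leq C(n,\Lambda,p_0)$, or more precisely a bound on $\|w-\underline w\|$ in a suitable Orlicz space forcing $\int_M e^{\varepsilon_0 |w - \underline{w}|}\,d\mu_h \leq 2$ for $\varepsilon_0 = \varepsilon_0(n,\Lambda,p_0)$ small. Replacing $u_0$ by $e^{-\underline w} u_0$ changes both integrals by reciprocal factors and hence leaves the product invariant, so we may normalize $\underline w = 0$. Finally, by Cauchy–Schwarz applied to $\int e^{\pm\varepsilon_0 w}$ against $\int e^{\mp \varepsilon_0 w}$ and the elementary bound $\int_M e^{\varepsilon_0 w} \leq \left(\int_M e^{\varepsilon_0|w|}\right)$, one gets
\[
\left(\int_M u_0^{\varepsilon_0}\,d\mu_h\right)\left(\int_M u_0^{-\varepsilon_0}\,d\mu_h\right) \leq \left(\int_M e^{\varepsilon_0 |w|}\,d\mu_h\right)^2 \leq (2)^2 \cdot (\text{vol factors}),
\]
and tracking the normalization $\Vol(M,h) \le C$ (from {\bf (A)}) plus choosing $\varepsilon_0$ small one lands below $4e^2$; the factor $e^2$ is slack left to absorb the volume normalization.

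**Main obstacle.** The genuine work is establishing the $L^1$ (or exponential-integrability) bound on $\log u_0$ uniformly in $i$, i.e. porting the local interior estimate \cite[(4.8)]{HanLin1997} to a closed manifold and making every constant depend only on $(n,\Lambda,p_0)$. This requires (a) the uniform Sobolev and Poincaré constants already recorded in the excerpt, (b) a uniform lower bound $\Vol(M,g_0) \geq \Lambda^{-1}$ together with the $L^{p_0}$ bound to keep the nonlinear term $u_0^{N-1}$ in a fixed $L^q$ class with $q > n/2$ — this is precisely where the hypothesis $p_0 > \tfrac{2n}{n-2}$ enters, since one needs $(N-1)q' \le p_0$ with $q > n/2$, equivalently $\tfrac{4}{n-2}\cdot\tfrac{?}{} \le p_0$, and $\tfrac{2n}{n-2}$ is the threshold — and (c) the Bombieri–Giusti lemma to upgrade a weak-$L^1$ bound on $w$ to genuine exponential integrability with an explicit small exponent. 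I expect (b), verifying the exponent arithmetic so that the nonlinearity is subcritical, to be the one point needing care; everything else is a routine Moser iteration carried out globally.
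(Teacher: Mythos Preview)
Your proposal is correct and follows essentially the same route as the paper: set $w = \log u_0$ (centered to have zero $h$-mean), derive $|\nabla^h w|_h^2 \leq -\Delta_h w + Cu_0^{4/(n-2)} + C$ from the scalar curvature equation, use $p_0 > \tfrac{2n}{n-2}$ to place $u_0^{4/(n-2)}$ in $L^\beta$ with $\beta > n/2$, and then Moser-iterate to obtain $\|w\|_{L^p} \leq C_0 p$, equivalently $\int_M e^{\varepsilon_0|w|}\,d\mu_h \leq 2e$, from which the product bound follows. The only cosmetic difference is that the paper carries out the iteration explicitly (deriving the recursion $\|w\|_{L^{\gamma p}} \leq (Cp)^{C/p}\|w\|_{L^p} + Cp$ and summing the exponential series) rather than invoking Bombieri--Giusti as a black box.
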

\begin{proof}
The proof is similar to that of the local version \cite[(4.8)]{HanLin1997}. For the sake of completeness, we include the proof here. Since $g_{0}=u_{0}^{\frac{4}{n-2}}h$, then the scalar curvature of $g_{0}$ is
\begin{equation}\label{Sec2: inequality eqn 5}
R_{g_{0}} = u_{0}^{-\frac{n+2}{n-2}}\left(R_{h}u_{0}-\frac{4(n-1)}{n-2}\Delta_{h} u_{0}\right),
\end{equation}
Define
\begin{equation}\label{def of w}
w = \log u_{0}-\frac{1}{\mathrm{Vol}(M,h)}\int_{M}(\log u_{0})d\mu_{h}.
\end{equation}
Direct calculation shows
\begin{equation}
\begin{split}
R_{g_{0}} = {} & u_{0}^{-\frac{4}{n-2}}\left(R_{h}-\frac{4(n-1)}{n-2}\frac{\Delta_{h}u_{0}}{u_{0}}\right) \\
= {} & u_{0}^{-\frac{4}{n-2}}\left(R_{h}-\frac{4(n-1)}{n-2}\left(\Delta_{h}w+|\nabla^{h}w|_{h}^{2}\right)\right).
\end{split}
\end{equation}
Recalling $R_{g_{0}}\geq-1$,
\begin{equation}\label{Sec2: inequality eqn 1}
|\nabla^{h}w|_{h}^{2} = -\Delta_{h}w+\frac{n-2}{4(n-1)}\left(R_{h}-R_{g_{0}}u_{0}^{\frac{4}{n-2}}\right)
\leq -\Delta_{h}w+Cu_{0}^{\frac{4}{n-2}}+C
\end{equation}
for some $C(n,\Lambda)$. Since $p_{0}>\frac{2n}{n-2}>\frac{4}{n-2}$, we obtain
\begin{equation}\label{Sec2: inequality eqn 4}
\int_{M}|\nabla^{h}w|_{h}^{2}d\mu_{h} \leq \int_{M}(Cu_{0}^{\frac{4}{n-2}}+C)d\mu_{h}
\leq C\|u_{0}\|_{p_{0}}^{p_{0}}+C \leq C(n,\Lambda_{0},p_{0}).
\end{equation}
Combining this with the Sobolev inequality and the Poincar\'{e} inequality,
\begin{equation}\label{Sec2: inequality eqn 2}
\begin{split}
\left(\int_{M}|w|^{\frac{2n}{n-2}}d\mu_{h}\right)^{\frac{n-2}{n}}
\leq {} & C_{S}\int_{M}|\nabla^{h}w|_{h}^{2}d\mu_{h}+C_{S}\int_{M}|w|^{2}d\mu_{h} \\
\leq {} & C_{S}(1+C_{P})\int_{M}|\nabla^{h}w|_{h}^{2}d\mu_{h} \leq C(n,\Lambda,p_{0}).
\end{split}
\end{equation}
Define two constants $\beta$ and $\beta^{*}$ by
\begin{equation}
\frac{4\beta}{n-2} = p_{0}, \quad \frac{1}{\beta}+\frac{1}{\beta^{*}} = 1.
\end{equation}
Then $p_{0}>\frac{2n}{n-2}$ implies $\beta>\frac{n}{2}$ and $\beta^{*}<\frac{n}{n-2}$. Using \eqref{Sec2: inequality eqn 1}, \eqref{Sec2: inequality eqn 4} and the H\"older inequality, for $p>1$, we compute
\begin{equation}
\begin{split}
& \int_{M}|w|^{p}|\nabla^{h}w|_{h}^{2}d\mu_{h} \\
\leq {} & p\int_{M}|w|^{p-1}|\nabla^{h}w|_{h}^{2}d\mu_{h}+C\int_{M}|w|^{p}u_{0}^{\frac{4}{n-2}}d\mu_{h}+C\int_{M}|w|^{p}d\mu_{h} \\
\leq {} & \frac{1}{2}\int_{M}|w|^{p}|\nabla^{h}w|_{h}^{2}d\mu_{h}+(Cp)^{p}\int_{M}|\nabla^{h}w|_{h}^{2}d\mu_{h} \\
&+C\left(\int_{M}u_{0}^{p_{0}}d\mu_{h}\right)^{\frac{1}{\beta}}\left(\int_{M}|w|^{p\beta^{*}}d\mu_{h}\right)^{\frac{1}{\beta^{*}}}
+C\left(\int_{M}|w|^{p\beta^{*}}d\mu_{h}\right)^{\frac{1}{\beta^{*}}} \\
\leq {} & \frac{1}{2}\int_{M}|w|^{p}|\nabla^{h}w|_{h}^{2}d\mu_{h}+(Cp)^{p}+C\left(\int_{M}|w|^{p\beta^{*}}d\mu_{h}\right)^{\frac{1}{\beta^{*}}},
\end{split}
\end{equation}
which implies
\begin{equation}
\int_{M}|\nabla(|w|^{\frac{p+2}{2}})|_{h}^{2}d\mu_{h} \leq C(p+2)^{2}\left(\int_{M}|w|^{p\beta^{*}}d\mu_{h}\right)^{\frac{1}{\beta^{*}}}+(C(p+2))^{p+2},
\end{equation}
Combining this with the Sobolev inequality and the H\"older inequality,
\begin{equation}
\begin{split}
& \left(\int_{M}|w|^{\frac{n(p+2)}{n-2}}d\mu_{h}\right)^{\frac{n-2}{n}} \\
\leq {} & C(p+2)^{2}\left(\int_{M}|w|^{p\beta^{*}}d\mu_{h}\right)^{\frac{1}{\beta^{*}}}+C\int_{M}|w|^{p+2}d\mu_{h}+(C(p+2))^{p+2} \\
\leq {} & C(p+2)^{2}\left(\int_{M}|w|^{(p+2)\beta^{*}}d\mu_{h}\right)^{\frac{1}{\beta^{*}}}+(C(p+2))^{p+2}.
\end{split}
\end{equation}
It then follows that
\begin{equation}
\|w\|_{L^{\frac{n(p+2)}{n-2}}} \leq (C(p+2))^{\frac{2}{p+2}}\|w\|_{L^{(p+2)\beta^{*}}}+C(p+2), \quad \text{for $p>1$}.
\end{equation}
Replacing $(p+2)\beta^{*}$ by $p$, and writing $\gamma=\frac{n}{\beta^{*}(n-2)}$, we obtain
\begin{equation}\label{Sec2: inequality eqn 3}
\|w\|_{L^{\gamma p}} \leq (Cp)^{\frac{C}{p}}\|w\|_{L^{p}}+Cp, \quad \text{for $p>3\beta^{*}$}.
\end{equation}
Recalling $\beta^{*}<\frac{n}{n-2}$, we have $\gamma>1$. Using \eqref{Sec2: inequality eqn 3} repeatedly,
\begin{equation}\label{Sec2: inequality eqn 6}
\|w\|_{L^{6\beta^{*}}} \leq C\|w\|_{L^{4\beta^{*}}}+C.
\end{equation}
By \eqref{Sec2: inequality eqn 2}, we have $\|w\|_{L^{2\beta^{*}}}\leq C$. Using the Cauchy-Schwarz inequality,
\begin{equation}\label{Sec2: inequality eqn 7}
\|w\|_{L^{4\beta^{*}}} = \left(\int_{M}|w|^{\beta^{*}}|w|^{3\beta^{*}}d\mu_{h}\right)^{\frac{1}{4\beta^{*}}}
\leq  \|w\|_{L^{2\beta^{*}}}^{\frac{1}{4}}\|w\|_{L^{6\beta^{*}}}^{\frac{3}{4}}
\leq  C\|w\|_{L^{6\beta^{*}}}^{\frac{3}{4}}.
\end{equation}
Then \eqref{Sec2: inequality eqn 6} and \eqref{Sec2: inequality eqn 7} show $\|w\|_{L^{6\beta^{*}}}\leq C$. Combining this with \eqref{Sec2: inequality eqn 3}, we obtain
\begin{equation}
\|w\|_{L^{\gamma^{k}}} \leq \left(C\gamma^{k-1}\right)^{\frac{C}{\gamma^{k-1}}}\|w\|_{L^{\gamma^{k-1}}}+C\gamma^{k-1}, \quad \text{for $k\in\mathbb{N}$},
\end{equation}
and so
\begin{equation}
\|w\|_{L^{\gamma^{k}}} \leq C(n,\Lambda,p_{0})\gamma^{k}, \quad \text{for $k\in\mathbb{N}$}.
\end{equation}
This shows
\begin{equation}
\|w\|_{L^{p}} \leq C_{0}(n,\Lambda,p_{0})p,\quad \text{for $p\geq1$}.
\end{equation}
Choosing $\ve_{0}(n,\Lambda,p_{0})$ such that $C_{0}\ve_{0}\leq\frac{1}{2}$, then
\begin{equation}
\int_{M}e^{\ve_{0}|w|}d\mu_{h}
= \sum_{k=0}^{\infty}\int_{M}\frac{\ve_{0}^{k}|w|^{k}}{k!}d\mu_{h}
\leq \sum_{k=0}^{\infty}\frac{(C_{0}\ve_{0})^{k}k^{k}}{k!}
\leq e\sum_{k=0}^{\infty}(C_{0}\ve_{0})^{k} \leq 2e.
\end{equation}
Recalling the definition of $w$ \eqref{def of w}, we obtain
\begin{equation}
\left(\int_{M}u_{0}^{\ve_{0}}d\mu_{h}\right)\left(\int_{M}u_{0}^{-\ve_{0}}d\mu_{h}\right)
= \left(\int_{M}e^{\ve_{0}w}d\mu_{h}\right)\left(\int_{M}e^{-\ve_{0}w}d\mu_{h}\right) \leq 4e^{2}.
\end{equation}
\end{proof}

\begin{lma}\label{Sec2: low}
If $p_0>\frac{2n}{n-2}$, then there is a constant $c(n,\Lambda,p_0)>0$ such that
\begin{equation}
\inf_M u_0(x) \geq c(n,\Lambda,p_0).
\end{equation}
\end{lma}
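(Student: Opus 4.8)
The plan is to derive a pointwise differential inequality for $u_0$ from the scalar curvature lower bound, turn it into a subsolution inequality for a negative power $u_0^{-\beta}$, and run a Moser iteration whose initial $L^{s}$-bound on $u_0^{-\beta}$ is supplied by Lemma~\ref{Sec2: inequality} together with assumption ${\bf (B)}$. First I would note that the conclusion of Lemma~\ref{Sec2: inequality} is only strengthened when $\ve_0$ is replaced by a smaller positive number (since $e^{\pm\ve_0' w}\le e^{\ve_0|w|}$ pointwise whenever $0<\ve_0'\le\ve_0$, where $w$ is as in \eqref{def of w}), so we may assume in addition that $\ve_0\le\tfrac{4}{n-2}$.

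\emph{Step 1 (differential inequality for $u_0$).} From the conformal transformation law \eqref{Sec2: inequality eqn 5} and $R_{g_0}\ge-1$ one gets $R_hu_0-\tfrac{4(n-1)}{n-2}\Delta_hu_0\ge-u_0^{N}$ with $N=\tfrac{n+2}{n-2}$, and since $|R_h|\le C(n,\Lambda)$ this yields
\[
\Delta_hu_0\le C(n,\Lambda)\big(u_0^{N}+u_0\big).
\]
\emph{Step 2 (a reverse integral bound).} Since $\ve_0<\tfrac{2n}{n-2}<p_0$, write $\tfrac{2n}{n-2}=(1-\lambda)\ve_0+\lambda p_0$ with $\lambda\in(0,1)$; by the H\"older inequality and assumption ${\bf (B)}$,
\[
\Lambda^{-1}\le\Vol(M,g_0)=\int_Mu_0^{\frac{2n}{n-2}}d\mu_h\le\Big(\int_Mu_0^{\ve_0}d\mu_h\Big)^{1-\lambda}\Lambda^{\lambda p_0},
\]
so $\int_Mu_0^{\ve_0}d\mu_h\ge c(n,\Lambda,p_0)>0$, and Lemma~\ref{Sec2: inequality} then gives $\int_Mu_0^{-\ve_0}d\mu_h\le C(n,\Lambda,p_0)$.

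\emph{Step 3 (Moser iteration).} Set $\beta=\tfrac{4}{n-2}=N-1$ and $v:=1+u_0^{-\beta}>0$. Using $\Delta_h(u_0^{-\beta})\ge-\beta u_0^{-\beta-1}\Delta_hu_0$ together with Step 1 and the fact that $N-1-\beta=0$, one obtains $\Delta_hv\ge-C(n,\Lambda)\,v$, i.e.\ $v$ is a positive subsolution of $\Delta_hv+C(n,\Lambda)v\ge0$ on the closed manifold $M$. A standard global Moser iteration — testing against $v^{2\alpha-1}$, integrating over $M$ (no boundary terms appear), and using the Sobolev inequality from ${\bf (A)}$ — gives $\sup_Mv\le C(n,\Lambda)\|v\|_{L^2(M,h)}$, and the usual interpolation trick (absorbing a power of $\sup_Mv$, legitimate since $\ve_0/\beta\le1<2$) improves this to $\sup_Mv\le C(n,\Lambda,p_0)\|v\|_{L^{\ve_0/\beta}(M,h)}$. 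Since $\ve_0/\beta\le1$ we have $\int_Mv^{\ve_0/\beta}d\mu_h\le\Vol(M,h)+\int_Mu_0^{-\ve_0}d\mu_h\le C(n,\Lambda,p_0)$ by Step 2 and the Bishop--Gromov bound $\Vol(M,h)\le C(n,\Lambda)$. Hence $\sup_Mu_0^{-\beta}\le\sup_Mv\le C(n,\Lambda,p_0)$, which is equivalent to the claimed lower bound $\inf_Mu_0\ge c(n,\Lambda,p_0)>0$.

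The substance of the argument is that every constant depends only on $(n,\Lambda,p_0)$: the Moser iteration is uniform because ${\bf (A)}$ provides uniform Sobolev and volume bounds, and the one additional ingredient needed to seed it is a uniform bound on $\int_Mu_0^{-\ve_0}d\mu_h$, which is exactly where Lemma~\ref{Sec2: inequality} enters, in conjunction with the volume lower bound and the $L^{p_0}$ bound of ${\bf (B)}$. The volume lower bound cannot be omitted (otherwise $u_0$ could be uniformly small), so all three parts of ${\bf (B)}$ are genuinely used. I expect the main obstacle to be Step~2: converting the symmetric integral estimate of Lemma~\ref{Sec2: inequality} into a one-sided, quantitatively uniform bound on $\int_Mu_0^{-\ve_0}d\mu_h$ via the interpolation against $\Vol(M,g_0)$ and $\|u_0\|_{L^{p_0}}$.
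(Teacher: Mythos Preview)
Your proof is correct and follows the same overall scheme as the paper: derive a differential inequality for a negative power of $u_0$, run a Moser iteration to reduce to an $L^{\ve_0}$ bound on $u_0^{-1}$, and then control that bound by combining Lemma~\ref{Sec2: inequality} with the lower bound on $\int_M u_0^{\ve_0}$ obtained from $\mathrm{Vol}(M,g_0)\ge\Lambda^{-1}$ and $\|u_0\|_{L^{p_0}}\le\Lambda$.

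The one genuine difference is your choice of exponent. The paper works with $v=u_0^{-1}$, which produces the inhomogeneous inequality $\Delta_h v\ge -Cv-Cu_0^{4/(n-2)}$; the extra term forces an additional H\"older step inside each round of the iteration (pairing $v^{p}$ against $u_0^{4/(n-2)}$ via the conjugate exponent $\beta^*<\tfrac{n}{n-2}$), so the $L^{p_0}$ bound on $u_0$ is used throughout the iteration. Your choice $\beta=\tfrac{4}{n-2}$ is tuned so that $N-\beta-1=0$, collapsing the inhomogeneity to a constant and yielding the clean subsolution inequality $\Delta_h v+Cv\ge0$ for $v=1+u_0^{-\beta}$. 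This makes the iteration itself purely linear and independent of the $L^{p_0}$ bound, which then enters only once, in Step~2. Both routes give the same conclusion with the same dependence of constants; yours is a small but pleasant simplification.
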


\begin{proof}
By \eqref{Sec2: inequality eqn 5}, we have
\begin{equation}
\frac{4(n-1)}{n-2}\Delta_{h}u_{0} = R_{h}u_{0}-R_{g_0}u_{0}^\frac{n+2}{n-2}.
\end{equation}
Using $R_{g_{0}}\geq-1$, the function $v=u_{0}^{-1}$ satisfies
\begin{equation}\label{Sec2: low eqn 1}
\begin{split}
\Delta_{h} v = {} & -v^2\Delta_{h}u_{0}+2v^3 |\nabla^{h}u|_{h}^2 \\[1mm]
\geq {} & -\frac{(n-2)v}{4(n-1)} \left(R_h -R_{g_0}u_{0}^\frac{4}{n-2}\right) \\
\geq {} & -C(n)\Lambda v-C(n)u_{0}^\frac{4}{n-2}.
\end{split}
\end{equation}
Define two constants $\beta$ and $\beta^{*}$ by
\begin{equation}
\frac{4\beta}{n-2} = p_{0}, \quad \frac{1}{\beta}+\frac{1}{\beta^{*}} = 1.
\end{equation}
Then $p_{0}>\frac{2n}{n-2}$ implies $\beta>\frac{n}{2}$ and $\beta^{*}<\frac{n}{n-2}$. For $p>1$, multiplying $-v^{p-1}$ on both sides of \eqref{Sec2: low eqn 1} and integrating on $(M,h)$, we have
\begin{equation}\label{Sec2:low lemma}
\begin{split}
 \frac{4(p-1)}{p^2}\int_M | \nabla^{h}v^{\frac{p}{2}}|_h^2 d\mu_h = {} & -\int_M (v^{p-1} \Delta_h v) d\mu_h  \\[1mm]
\leq {} & C \int_M v^p d\mu_h +C \int_M (u_{0}^\frac4{n-2} v^p) d\mu_h\\
\leq {} & C \int_M v^p d\mu_h +C \left(\int_M u_{0}^{p_{0}} d\mu_h\right)^{\frac{1}{\beta}} \left(\int_M v^{p\beta^*}d\mu_h \right)^{\frac{1}{\beta^{*}}}
\end{split}
\end{equation}
for some $C(n,\Lambda)$. Combining this with $\|u_{0}\|_{L^{p_{0}}}\leq\Lambda$, the Sobolev inequality and the H\"older inequality, for $p>1$,
\begin{equation}
\begin{split}
\frac{p-1}{p^2} \left(\int_M v^\frac{pn}{n-2} d\mu_h \right)^\frac{n-2}{n}
&\leq C(n,\Lambda,p_0) \left(\int_M v^{p\beta^*}d\mu_h\right)^{1/\beta^*}.
\end{split}
\end{equation}
Recalling $\beta^*<\frac{n}{n-2}$, we may apply the iteration method to conclude that
\begin{equation}\label{Sec2: low-MV}
\|v\|_{L^\infty} \leq C(n,\Lambda,p_0) \|v\|_{L^{\ve_{0}}},
\end{equation}
where $\ve_{0}$ is the constant in Lemma \ref{Sec2: inequality}.

It remains to control $L^{\ve_{0}}$ bound of $v=u_{0}^{-1}$. Thanks to Lemma \ref{Sec2: inequality}, it suffices to establish the positive lower bound of $\|u_{0}\|_{L^{\ve_{0}}}$. We may assume $\e_0=\frac{2n\sigma }{n-2}$ for some $\sigma<1$. Otherwise, the required estimate follows from $\Vol(M,g_{0})\geq\Lambda^{-1}$ and the H\"older inequality. For any $\gamma\in(0,\sigma)$,
\begin{equation}\label{Sec2: rev--low}
\begin{split}
\Lambda^{-1} \leq {} & \mathrm{Vol}(M,g_{0}) = \int_M u_{0}^\frac{2n}{n-2} d\mu_h
=  \int_M u_{0}^{\frac{\ve_{0}\gamma}{\sigma}+\frac{2n(1-\gamma)}{n-2}} d\mu_h \\[1mm]
\leq {} & \left(\int_M u_{0}^{\e_0}d\mu_h \right)^\frac{\gamma}{\sigma}
\left(\int_M u_{0}^\frac{2n\sigma(1-\gamma)}{(n-2)(\sigma-\gamma)}d\mu_{h}\right)^\frac{\sigma-\gamma}{\sigma}.
\end{split}
\end{equation}
Since $p_0>\frac{2n}{n-2}$, we may choose $\gamma<\sigma$ such that
\begin{equation}
\frac{2n\sigma(1-\gamma)}{(n-2)(\sigma-\gamma)} = p_0.
\end{equation}
Using $\|u_{0}\|_{L^{p_{0}}}\leq\Lambda$, we obtain
\begin{equation}
\int_M u_{0}^{\e_0}d\mu_h \geq c(n,\Lambda,p_{0}) > 0,
\end{equation}
which completes the proof.
\end{proof}

Using Lemma \ref{Sec2: low}, we establish the lower bound of the Yamabe flow:
\begin{prop}\label{Sec2: low YF}
Let $g(t)=u(t)^{\frac{4}{n-2}}h$ be a solution of the Yamabe flow \eqref{Yamabe flow}. Suppose that $g(t)$ exists on $M\times[0,T]$. If $p_{0}>\frac{2n}{n-2}$, then there are constants $\hat{T}$ and $c$ depending only on $(n,\Lambda,p_{0})$ such that
\[
\inf_{M\times[0,\hat{T}\wedge T]}u \geq c > 0.
\]
\end{prop}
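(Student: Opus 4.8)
The strategy is to run a parabolic barrier argument for $u(t)$ using the lower bound $\inf_M u_0 \geq c(n,\Lambda,p_0)$ supplied by Lemma~\ref{Sec2: low}, together with a control on $\sup_M u(t)$ on a short time interval. First I would recall the evolution equation \eqref{evo-u}: writing $v = u^N$ with $N = \frac{n+2}{n-2}$, we have $\partial_t v = \frac{n+2}{4}\big(\frac{4(n-1)}{n-2}\Delta_h u - R_h u\big)$, so that along the flow $\partial_t u = \frac{1}{N}u^{1-N}\cdot\frac{n+2}{4}\big(\frac{4(n-1)}{n-2}\Delta_h u - R_h u\big)$. Equivalently, since $\partial_t g = -R(g)g$, the scalar curvature satisfies the reaction-diffusion equation $\partial_t R(g) = (n-1)\Delta_{g}R(g) + R(g)^2$, which gives $R(g(t)) \geq -\delta \geq -1$ preserved (in fact improving) for all time the flow exists; this is the key sign condition I will exploit. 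In particular, since $\frac{\partial g}{\partial t} = -R(g)g \leq g$ pointwise (as $R(g) \geq -1$), we get $g(t) \leq e^t g_0$, hence $u(t)^{\frac{4}{n-2}} \leq e^t u_0^{\frac{4}{n-2}}$, i.e. $u(t) \leq e^{\frac{(n-2)t}{4}} u_0$. Combined with $\|u_0\|_{L^{p_0}(M,h)} \leq \Lambda$, this controls $\|u(t)\|_{L^{p_0}(M,h)}$ on any finite interval, which is the crucial ingredient to rerun the type of integral estimates from Lemma~\ref{Sec2: inequality} and Lemma~\ref{Sec2: low} at positive times — but one still needs a pointwise, not merely integral, lower bound that does not degenerate as $t \to 0$.

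For the genuine lower bound, I would proceed by constructing an explicit subsolution. Set $\phi(x,t) = u(x,t)$ and consider the barrier $\psi(t) = c_1(1 - A t)$ for small constants $c_1, A > 0$ to be chosen, where $c_1 \leq \frac{1}{2}\inf_M u_0$. Using \eqref{evo-u} and the fact that $\Delta_h \psi = 0$ (it is spatially constant), at an interior spatial minimum of $u - \psi$ we have $\Delta_h u \geq 0$, and the equation for $u^N$ gives, at such a point,
\begin{equation*}
\frac{\partial (u^N)}{\partial t} = \frac{n+2}{4}\left(\frac{4(n-1)}{n-2}\Delta_h u - R_h u\right) \geq -\frac{(n+2)}{4}\|R_h\|_{L^\infty} u \geq -C(n,\Lambda)\, u,
\end{equation*}
using $|\Ric(h)|_h \leq \Lambda$ hence $|R_h| \leq C(n)\Lambda$. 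Thus $N u^{N-1}\partial_t u \geq -C u$, i.e. $\partial_t u \geq -\frac{C}{N} u^{2-N}$. Since $N > 1$, if $u$ is small then $u^{2-N}$ is large, so this crude bound is not immediately enough — I would instead track $m(t) := \inf_M u(\cdot,t)$ and show it satisfies the differential inequality $m'(t) \geq -C(n,\Lambda)\, m(t)^{2-N}$ in the barrier sense, or equivalently $\frac{d}{dt}\big(m(t)^{N-1}\big) \geq -C(N-1)$, giving $m(t)^{N-1} \geq m(0)^{N-1} - C(N-1)t$. Choosing $\hat T = \hat T(n,\Lambda,p_0)$ so small that $C(N-1)\hat T \leq \frac{1}{2} m(0)^{N-1} = \frac{1}{2}(\inf_M u_0)^{N-1}$, which is permissible since $\inf_M u_0 \geq c(n,\Lambda,p_0)$ by Lemma~\ref{Sec2: low}, we conclude $m(t) \geq \big(\tfrac{1}{2}\big)^{\frac{1}{N-1}} c(n,\Lambda,p_0) =: c > 0$ for all $t \in [0, \hat T \wedge T]$.

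The main obstacle — and the point requiring care — is making the maximum-principle / barrier argument rigorous for the \emph{degenerate} quasilinear equation \eqref{evo-u}: the equation for $u^N$ is a fast-diffusion-type equation (since $N > 1$, $u = (u^N)^{1/N}$ with $1/N < 1$), so a priori smoothness and the validity of the classical maximum principle near a potential zero of $u$ are not automatic. I would handle this either by (a) invoking short-time existence and regularity theory for the Yamabe flow with the stated initial data to ensure $u(t)$ is smooth and positive on $(0,T]$ and continuous up to $t=0$, so that $m(t)$ is well-defined and the barrier comparison is legitimate (the flow is assumed to exist on $M\times[0,T]$ in the hypothesis, so this is essentially given), or (b) working directly with the evolution $\partial_t v = \frac{n+2}{4}\big(\frac{4(n-1)}{n-2} v^{1/N}\Delta_h(v^{1/N})/v^{1/N}\cdots\big)$ — more cleanly, using that $v = u^N$ itself satisfies a uniformly parabolic equation once we have the upper bound $u \leq e^{\frac{(n-2)t}{4}}u_0$ on the \emph{set where $u$ stays bounded below}, and bootstrapping. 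A clean alternative avoiding fast-diffusion subtleties: differentiate $g(t) \geq e^{-t}g_0$ is false in general, but one can instead note $\partial_t \log u = \frac{n-2}{4N u^N}\cdot(n+2)\big(\frac{(n-1)}{n-2}\Delta_h u - \frac14 R_h u\big)/u$ and run the minimum principle on $\log u$ directly, where the $|\nabla^h \log u|^2$ term that appears has a favorable sign at a minimum. In any case, once the comparison principle is justified, the ODE $\frac{d}{dt}(m^{N-1}) \geq -C$ and the short-time choice of $\hat T$ finish the proof.
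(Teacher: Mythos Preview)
Your core argument --- applying the minimum principle to \eqref{evo-u} at a spatial minimum of $u$ to obtain $\frac{d}{dt}\big(m(t)^{N-1}\big) \geq -C(n,\Lambda)$ for $m(t) = \inf_M u(\cdot,t)$, and then invoking Lemma~\ref{Sec2: low} for $m(0)$ --- is exactly the paper's proof, which dispatches it in three lines (note $N-1 = \tfrac{4}{n-2}$, matching the exponent in the paper's displayed inequality). The detours through the upper bound $u(t) \leq e^{(n-2)t/4}u_0$ and the concerns about fast-diffusion degeneracy are unnecessary: the flow is assumed to exist on $M\times[0,T]$, so $u$ is smooth and positive and the classical minimum principle applies directly.
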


\begin{proof}
Applying the minimum principle to \eqref{evo-u} and using Lemma \ref{Sec2: low}, for $(x,t)\in M\times[0,T]$,
\begin{equation}
u(x,t) \geq \left(-C(n)\Lambda t+\inf_M u_0^\frac{4}{n-2}\right)^\frac{n-2}{4}
\geq \left(-C(n)\Lambda t+c_0(n,\Lambda,p_0)\right)^\frac{n-2}{4}.
\end{equation}
Choosing $\hat{T}=\frac{c_{0}}{2C(n)\Lambda}$, we complete the proof.
\end{proof}

\subsection{Upper bound of the Yamabe flow}
Next, we will show that if $p_0$ is sufficiently large, then along the Yamabe flow, $u(t)$ will be bounded from above instantaneously.
\begin{prop}\label{Sec2: up YF}
Let $g(t)=u(t)^{\frac{4}{n-2}}h$ be a solution of the Yamabe flow \eqref{Yamabe flow}. Suppose that $g(t)$ exists on $M\times[0,T]$ for some $T\leq1$. If
$\frac{n}{2}+\frac{2n(n+4)}{(n-2)(n+2)}<p_0<\infty$, then for any $\left(\frac{2p_{0}}{n}-\frac{4(n+4)}{(n-2)(n+2)}\right)^{-1}<\alpha<1$, there is a constant $C(\alpha,n,\Lambda,p_{0})$ such that
\[
\sup_{M}u(\cdot,t) \leq Ct^{-\alpha}, \quad \text{for $t\in[0,T]$}.
\]
If $p_{0}=\infty$, then there is a constant $C(n,\Lambda)$ such that
\[
\sup_{M\times[0,T]}u \leq C.
\]
\end{prop}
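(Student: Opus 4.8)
The plan is to run a Moser iteration on the evolution equation \eqref{evo-u} for $u$, coupled with a time-weighted argument in the spirit of the short-time smoothing estimates in \cite{KochLamm2012, Simon2002}. The key point is that we already know (Proposition~\ref{Sec2: low YF}) that $u \geq c > 0$ on $M \times [0,\hat T \wedge T]$, so on that time interval the degenerate coefficient $u^{N-1}$ in the equation $\partial_t u = \tfrac{n+2}{4} u^{1-N}\left(\tfrac{4(n-1)}{n-2}\Delta_h u - R_h u\right)$ is uniformly bounded above and below, and the PDE becomes a uniformly parabolic equation for $u$ with a zeroth-order term bounded by $C(n,\Lambda)$. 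First I would fix the interval $[0,\hat T\wedge T]$ (outside of it the bound is trivial after rescaling $\hat T$), and reduce to controlling $\|u(\cdot,t)\|_{L^\infty}$ in terms of $\|u(\cdot,t)\|_{L^{p_0}}$, which is bounded by the monotonicity/quasi-monotonicity of $L^{p_0}$ norms along the flow — this is where the hypothesis $\|u_0\|_{L^{p_0}(M,h)}\le \Lambda$ enters.

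The core estimate is a differential inequality for $\Phi_p(t) := \int_M u^p\, d\mu_h$. Multiplying \eqref{evo-u} by $u^{p-N}$ (for suitable $p$) and integrating, using $\partial_t u^N = N u^{N-1}\partial_t u$, one gets, after integration by parts,
\[
\frac{d}{dt}\int_M u^{p}\,d\mu_h \leq -c(n,p)\int_M |\nabla^h u^{p/2}|_h^2\,d\mu_h + C(n,\Lambda,p)\int_M u^{p}\,d\mu_h,
\]
where the gradient term comes from the Laplacian and $c(n,p)>0$ once $p$ lies in the admissible range dictated by the exponents $N=\tfrac{n+2}{n-2}$. Feeding this into the Sobolev inequality of Section~\ref{estimateYamaflow} turns the dissipation term into an $L^{pn/(n-2)}$ quantity, giving a recursive family of inequalities of the form $\sup_{t}\, t^{a_{k}}\|u(\cdot,t)\|_{L^{p_k}} \leq C_k$ with $p_k = (n/(n-2))^k p_0$. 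The time weights $t^{a_k}$ are introduced because at each step one pays a factor $\int_0^t \|u\|^{p_{k-1}}$, and tracking the accumulated power of $t$ through the geometric sequence $p_k\to\infty$ is exactly what produces the exponent $\alpha$; the condition $p_0 > \tfrac n2 + \tfrac{2n(n+4)}{(n-2)(n+2)}$ is precisely what makes the series $\sum_k a_k$ converge and stays below $1$, allowing $\alpha$ in the stated range $\left(\tfrac{2p_0}{n}-\tfrac{4(n+4)}{(n-2)(n+2)}\right)^{-1}<\alpha<1$. When $p_0=\infty$ no time weight is needed: the maximum principle applied to \eqref{evo-u} directly, using $\sup_M u_0 \le \Lambda$ and $|R_h|\le C(n,\Lambda)$, gives $\sup_{M\times[0,T]} u \le C(n,\Lambda)$ on $T\le 1$.

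The main obstacle is the bookkeeping of the time exponents through the Moser iteration: one must show that the sequence of weights $a_k$ obtained from the recursion — each step contributing a term governed by the Sobolev exponent $\tfrac{n}{n-2}$ and by how $p_k$ compares with the "critical" test exponent forced by $u^{1-N}$ — sums to something strictly less than $1$, and that the constants $C_k$ do not blow up. This is where the somewhat unusual numerology $\tfrac{2n(n+4)}{(n-2)(n+2)}$ comes from, and getting the admissible range of $\alpha$ sharp requires care in choosing the test-function powers $p_k - N$ versus $p_k$ at each stage. Once the iteration closes, letting $k\to\infty$ yields $\|u(\cdot,t)\|_{L^\infty}\le C t^{-\alpha}$, and a covering/interpolation argument extends it from small $t$ to all $t\in[0,T]$.
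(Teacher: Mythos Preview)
Your overall plan---Moser iteration with time weights, and a direct maximum principle for $p_0=\infty$---is the same strategy the paper uses, and the $p_0=\infty$ case is handled exactly as you say. But the finite-$p_0$ argument has real gaps. First, the lower bound from Proposition~\ref{Sec2: low YF} does \emph{not} make \eqref{evo-u} uniformly parabolic: it bounds the diffusion coefficient $u^{1-N}$ only from above, not below (you would need an upper bound on $u$ for that, which is what you are trying to prove). The paper never invokes the lower bound in this proposition. Second, the energy identity you display is off: multiplying \eqref{evo-u} by $u^{p-N}$ gives
\[
\frac{N}{p}\frac{d}{dt}\int_M u^{p}\,d\mu_h + c(n,p)\int_M |\nabla^h u^{(p-N+1)/2}|_h^2\,d\mu_h \le C(n,\Lambda)\int_M u^{p-N+1}\,d\mu_h,
\]
with a shift by $N-1$ in both the gradient term and the source; the Sobolev step then controls $\|u\|_{L^{n(p-N+1)/(n-2)}}$, not $\|u\|_{L^{np/(n-2)}}$, so your recursion $p_k=(n/(n-2))^k p_0$ is not the one produced by the equation.

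The central missing idea is how the initial term $\int_M u_0^{p}$ is handled when $p>p_0$, and what fixed resource absorbs the time-weighted source terms. The paper resolves both at once: from the $p=p_0$ energy identity it first extracts the \emph{preliminary spacetime bound}
\[
\int_0^T\!\!\int_M u^{\frac{(n+2)p_0}{n}-N+1}\,d\mu_h\,dt \le C(n,\Lambda,p_0),
\]
and then sets $v=t^\alpha u$ (so $v(0)=0$ kills the initial data) and runs a single spacetime Moser iteration on $v$. The time weight produces an extra source $v^{p-1/\alpha}u^{N+1/\alpha-1}$, which is controlled via H\"older against the fixed spacetime bound above; the H\"older dual exponent satisfies $\beta<\frac{n+2}{n}$ exactly when $\alpha>\bigl(\tfrac{2p_0}{n}-\tfrac{4(n+4)}{(n-2)(n+2)}\bigr)^{-1}$, and this is where the threshold $p_0>\tfrac{n}{2}+\tfrac{2n(n+4)}{(n-2)(n+2)}$ actually appears. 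Your outline lacks both the preliminary spacetime estimate and this absorption mechanism, so the assertion that ``the series $\sum_k a_k$ converges and stays below $1$'' is unsupported; without these two ingredients the iteration does not close.
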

\begin{proof}
When $\frac{n}{2}+\frac{2n(n+4)}{(n-2)(n+2)}<p_0<\infty$, we split the proof into two steps:

\bigskip
\noindent
{\bf Step 1.} $\int_{0}^{T}\int_{M}u^{\frac{(n+2)p_{0}}{n}-N+1}d\mu_{h}dt\leq C(n,\Lambda,p_{0})$.
\bigskip

It is clear that $p_{0}>N=\frac{n+2}{n-2}$. Multiplying both sides of \eqref{evo-u} by $u^{p_{0}-N}$ and integrating by parts,
\begin{equation}
\begin{split}
& \frac{N}{p_{0}}\frac{\de}{\de t}\left(\int_{M}u^{p_{0}}d\mu_{h}\right)
+\frac{4N(n-1)(p_{0}-N)}{(p_{0}-N+1)^{2}}\int_{M}|\nabla^{h}u^{\frac{p_{0}-N+1}{2}}|_{h}^{2}d\mu_{h} \\
= {} & -\frac{n+2}{4}\int_{M}R_{h}u^{p_{0}-N+1}d\mu_{h}
\leq C(n)\Lambda\int_{M}u^{p_{0}-N+1}d\mu_{h}.
\end{split}
\end{equation}
Integrating this on $[0,T]$ and using $\|u_{0}\|_{L^{p_{0}}}\leq\Lambda$, we obtain
\begin{equation}
\begin{split}
& \sup_{t\in[0,T]}\left(\int_{M}u(t)^{p_{0}}d\mu_{h}\right)+\int_{0}^{T}\int_{M}|\nabla^{h}u^{\frac{p_{0}-N+1}{2}}|_{h}^{2}d\mu_{h}dt \\
\leq {} & C(n)\Lambda\int_{0}^{T}\int_{M}u^{p_{0}-N+1}d\mu_{h}dt+C(n,\Lambda,p_{0}).
\end{split}
\end{equation}
Combining this with the Sobolev inequality,
\begin{equation}
\begin{split}
& \sup_{t\in[0,T]}\left(\int_{M}u(t)^{p_{0}}d\mu_{h}\right)+\int_{0}^{T}\left(\int_{M}u^{\frac{n(p_{0}-N+1)}{n-2}}d\mu_{h}\right)^{\frac{n-2}{n}}dt \\
\leq {} & C(n)\Lambda\int_{0}^{T}\int_{M}u^{p_{0}-N+1}d\mu_{h}dt+C(n,\Lambda,p_{0}).
\end{split}
\end{equation}
Using $N>1$ and the Young inequality,
\begin{equation}
C(n)\Lambda\int_{0}^{T}\int_{M}u^{p_{0}-N+1}d\mu_{h}dt \leq \frac{1}{2}\sup_{t\in[0,T]}\left(\int_{M}u(t)^{p_{0}}d\mu_{h}\right)+C(n,\Lambda,p_{0}).
\end{equation}
Then
\begin{equation}\label{Sec2: up YF eqn 2}
\sup_{t\in[0,T]}\left(\int_{M}u(t)^{p_{0}}d\mu_{h}\right)+\int_{0}^{T}\left(\int_{M}u^{\frac{n(p_{0}-N+1)}{n-2}}d\mu_{h}\right)^{\frac{n-2}{n}}dt
\leq C(n,\Lambda,p_{0}).
\end{equation}
By the H\"older inequality,
\begin{equation}
\begin{split}
& \int_{0}^{T}\int_{M}u^{\frac{(n+2)p_{0}}{n}-N+1}d\mu_{h}dt
= \int_{0}^{T}\int_{M}u^{(p_{0}-N+1)+\frac{2p_{0}}{n}}d\mu_{h}dt \\
\leq {} & \left(\int_{0}^{T}\left(\int_{M}u^{\frac{n(p_{0}-N+1)}{n-2}}d\mu_{h}\right)^{\frac{n-2}{n}}dt\right)
\sup_{t\in[0,T]}\left(\int_{M}u^{p_{0}}(t)d\mu_{h}\right)^{\frac{2}{n}}
\leq C(n,\Lambda,p_{0}).
\end{split}
\end{equation}

\bigskip
\noindent
{\bf Step 2.} $\sup_{M}u(\cdot,t) \leq C(\alpha,n,\Lambda,p_{0})t^{-\alpha}$.
\bigskip

Define $v=t^{\alpha}u$. For $p>\max(\alpha^{-1},2)$, multiplying both sides of \eqref{evo-u} by $t^{\alpha}v^{p-1}$,
\begin{equation}
t^{\alpha}v^{p-1}\frac{\de u^{N}}{\de t}-N(n-1)v^{p-1}\Delta_{h}v = -\frac{n+2}{4}R_{h}v^{p}.
\end{equation}
We compute the first term of the left hand side:
\begin{equation}
\begin{split}
t^{\alpha}v^{p-1}\frac{\de u^{N}}{\de t}
= {} &  t^{\alpha p}u^{p-1}\frac{\de u^{N}}{\de t} = \frac{Nt^{\alpha p}}{N+p-1}\frac{\de u^{N+p-1}}{\de t} \\
= {} & \frac{N}{N+p-1}\frac{\de}{\de t}\left(t^{\alpha p}u^{N+p-1}\right)-\frac{\alpha Np}{N+p-1}t^{\alpha p-1}u^{N+p-1} \\
= {} & \frac{N}{N+p-1}\frac{\de}{\de t}\left(v^{p}u^{N-1}\right)-\frac{\alpha Np}{N+p-1}v^{p-\frac{1}{\alpha}}u^{N+\frac{1}{\alpha}-1}.
\end{split}
\end{equation}
Then
\begin{equation}
\begin{split}
& \frac{N}{N+p-1}\frac{\de}{\de t}\left(v^{p}u^{N-1}\right)-N(n-1)v^{p-1}\Delta_{h}v \\[1mm]
= {} & -\frac{n+2}{4}R_{h}v^{p}+\frac{\alpha Np}{N+p-1}v^{p-\frac{1}{\alpha}}u^{N+\frac{1}{\alpha}-1} \\[1mm]
\leq {} & C(n)\Lambda v^{p}+C(\alpha,n)v^{p-\frac{1}{\alpha}}u^{N+\frac{1}{\alpha}-1}.
\end{split}
\end{equation}
Integrating both sides on $(M,h)$,
\begin{equation}
\begin{split}
& \frac{N}{N+p-1}\frac{\de}{\de t}\left(\int_{M}v^{p}u^{N-1}d\mu_{h}\right)+\frac{4N(n-1)(p-1)}{p^{2}}\int_{M}|\nabla^{h}v^{\frac{p}{2}}|_{h}^{2}d\mu_{h} \\
\leq {} & C(\alpha,n,\Lambda)\int_{M}\left(v^{p}+v^{p-\frac{1}{\alpha}}u^{N+\frac{1}{\alpha}-1}\right)d\mu_{h}.
\end{split}
\end{equation}
Recalling that $v(0)=0$, the above inequality shows
\begin{equation}
\begin{split}
& \sup_{t\in[0,T]}\left(\int_{M}v^{p}(t)u^{N-1}(t)d\mu_{h}\right)+\int_{0}^{T}\int_{M}|\nabla^{h}v^{\frac{p}{2}}|_{h}^{2}d\mu_{h}dt \\
\leq {} & C(\alpha,n,\Lambda)p\int_{0}^{T}\int_{M}\left(v^{p}+v^{p-\frac{1}{\alpha}}u^{N+\frac{1}{\alpha}-1}\right)d\mu_{h}dt.
\end{split}
\end{equation}
By the Sobolev inequality,
\begin{equation}
\begin{split}
& \sup_{t\in[0,T]}\left(\int_{M}v^{p}(t)u^{N-1}(t)d\mu_{h}\right)+\int_{0}^{T}\left(\int_{M}v^{\frac{np}{n-2}}d\mu_{h}\right)^{\frac{n-2}{n}}dt \\
\leq {} & C(\alpha,n,\Lambda)p\int_{0}^{T}\int_{M}\left(v^{p}+v^{p-\frac{1}{\alpha}}u^{N+\frac{1}{\alpha}-1}\right)d\mu_{h}dt.
\end{split}
\end{equation}
Since $T\leq1$, then $v=t^{\alpha}u\leq u$, and so
\begin{equation}
\begin{split}
& \sup_{t\in[0,T]}\left(\int_{M}v^{p+N-1}(t)d\mu_{h}\right)+\int_{0}^{T}\left(\int_{M}v^{\frac{np}{n-2}}d\mu_{h}\right)^{\frac{n-2}{n}}dt \\
\leq {} & C(\alpha,n,\Lambda)p\int_{0}^{T}\int_{M}\left(v^{p}+v^{p-\frac{1}{\alpha}}u^{N+\frac{1}{\alpha}-1}\right)d\mu_{h}dt.
\end{split}
\end{equation}
Using $N>1$, we obtain
\begin{equation}
\begin{split}
& \sup_{t\in[0,T]}\left(\int_{M}v^{p}(t)d\mu_{h}\right)+\int_{0}^{T}\left(\int_{M}v^{\frac{np}{n-2}}d\mu_{h}\right)^{\frac{n-2}{n}}dt \\
\leq {} & Cp\int_{0}^{T}\int_{M}\left(v^{p}+v^{p-\frac{1}{\alpha}}u^{N+\frac{1}{\alpha}-1}\right)d\mu_{h}dt+C
\end{split}
\end{equation}
for some $C(\alpha,n,\Lambda)$. Combining this with the H\"older inequality,
\begin{equation}
\begin{split}
& \int_{0}^{T}\int_{M}v^{\frac{(n+2)p}{n}}d\mu_{h}dt
= \int_{0}^{T}\int_{M}v^{p+\frac{2p}{n}}d\mu_{h}dt \\
\leq {} & \left(\int_{0}^{T}\left(\int_{M}v^{\frac{np}{n-2}}d\mu_{h}\right)^{\frac{n-2}{n}}dt\right)
\sup_{t\in[0,T]}\left(\int_{M}v^{p}(t)d\mu_{h}\right)^{\frac{2}{n}} \\
\leq {} & \left(Cp\int_{0}^{T}\int_{M}\left(v^{p}+v^{p-\frac{1}{\alpha}}u^{N+\frac{1}{\alpha}-1}\right)d\mu_{h}dt+C\right)^{\frac{n+2}{n}}.
\end{split}
\end{equation}
Then
\begin{equation}\label{Sec2: up YF eqn 1}
\begin{split}
& \left(\int_{0}^{T}\int_{M}v^{\frac{(n+2)p}{n}}d\mu_{h}dt\right)^{\frac{n}{n+2}}
\leq Cp\int_{0}^{T}\int_{M}(v^{p}+1)(u^{N+\frac{1}{\alpha}-1}+1)d\mu_{h}dt+C \\
\leq {} & Cp\left(\left(\int_{0}^{T}\int_{M}v^{\beta p}d\mu_{h}dt\right)^{\frac{1}{\beta}}+1\right)
\left(\left(\int_{0}^{T}\int_{M}u^{\frac{(n+2)p_{0}}{n}-N+1}d\mu_{g}dt\right)^{\frac{1}{\beta^{*}}}+1\right),
\end{split}
\end{equation}
for some $C(\alpha,n,\Lambda)$, where
\begin{equation}
\frac{1}{\beta}+\frac{1}{\beta^{*}} = 1, \quad \beta^{*} = \frac{\frac{(n+2)p_{0}}{n}-N+1}{N+\frac{1}{\alpha}-1}.
\end{equation}
Combining \eqref{Sec2: up YF eqn 1} with Step 1,
\begin{equation}
\left(\int_{0}^{T}\int_{M}v^{\frac{(n+2)p}{n}}d\mu_{h}dt\right)^{\frac{n}{n+2}}
\leq  Cp\left(\int_{0}^{T}\int_{M}v^{\beta p}d\mu_{h}dt\right)^{\frac{1}{\beta}}+Cp
\end{equation}
for some $C(\alpha,n,\Lambda,p_{0})$. Recalling $\alpha>\left(\frac{2p_{0}}{n}-\frac{4(n+4)}{(n-2)(n+2)}\right)^{-1}$ and $N=\frac{n+2}{n-2}$, we obtain
\begin{equation}
\beta^{*} = \frac{\frac{(n+2)p_{0}}{n}-N+1}{N+\frac{1}{\alpha}-1}
= \frac{\frac{(n+2)p_{0}}{n}-\frac{4}{n-2}}{\frac{1}{\alpha}+\frac{4}{n-2}} > \frac{n+2}{2},
\end{equation}
which implies $\beta <\frac{n+2}{n}$. Applying the iteration method and Step 1, we obtain
\begin{equation}
\sup_{M\times[0,T]}(t^{\alpha}u) = \sup_{M\times[0,T]}v
\leq C(\alpha,n,\Lambda,p_{0})\left(\int_{0}^{T}\int_{M}v^{p_{0}}d\mu_{h}dt\right)^{\frac{1}{p_{0}}}.
\end{equation}
Combining this with \eqref{Sec2: up YF eqn 2}, $v\leq u$ and $T\leq1$,
\begin{equation}
\sup_{M\times[0,T]}(t^{\alpha}u) \leq C(\alpha,n,\Lambda,p_{0})\left(\int_{0}^{T}\int_{M}u^{p_{0}}d\mu_{h}dt\right)^{\frac{1}{p_{0}}}
\leq C(\alpha,n,\Lambda,p_{0}).
\end{equation}

\bigskip

When $p_{0}=\infty$, applying the maximum principle to \eqref{evo-u} and using $T\leq1$, for any $(x,t)\in M\times[0,T]$,
\begin{equation}
u(x,t) \leq \left(C(n)\Lambda t+\sup_M u_0^\frac{4}{n-2}\right)^\frac{n-2}{4}
\leq C(n,\Lambda).
\end{equation}
\end{proof}

\subsection{More estimates along the Yamabe flow}
\begin{prop}\label{Sec2: YFC1-estimates}
Let $g(t)=u(t)^{\frac{4}{n-2}}h$ be a solution of the Yamabe flow \eqref{Yamabe flow}. If $p_0>\frac{n}{2}+\frac{2n(n+4)}{(n-2)(n+2)}$, then there is a constant $T_{0}(n,\Lambda,p_{0})$ such that $g(t)$ exists on $[0,T_{0}]$. Moreover, for all $[a,T_{0}]\subset(0,T_{0}]$, there is a constant $\lambda(a,n,\Lambda,p_0)>1$ such that
\begin{equation}\label{Sec2: YFC1-estimates eqn 3}
\left\{
\begin{array}{ll}
\lambda^{-1}h \leq g(t)\leq \lambda h;\\[1mm]
|\nabla^h g(t)|_{h} \leq \lambda;\\[1mm]
-\delta \leq R_{g(t)} \leq \lambda
\end{array}
\right.
\end{equation}
on $M\times [a,T_{0}]$.
\end{prop}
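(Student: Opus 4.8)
\emph{Setup.} Since $g_0$ is smooth and, by Lemma~\ref{Sec2: low} (note that $p_0>\frac{n}{2}+\frac{2n(n+4)}{(n-2)(n+2)}>\frac{2n}{n-2}$), $u_0\ge c_0(n,\Lambda,p_0)>0$, equation \eqref{evo-u} is uniformly parabolic near $t=0$ and has a unique short-time solution; let $[0,T_{\max})$ be its maximal interval of existence and set $T_0:=\min(\hat T,1)$, where $\hat T=\hat T(n,\Lambda,p_0)$ comes from Proposition~\ref{Sec2: low YF}. It suffices to prove \eqref{Sec2: YFC1-estimates eqn 3} on $M\times[a,T_0\wedge T_{\max})$ with constants depending only on $(a,n,\Lambda,p_0)$ and then to show that $T_{\max}>T_0$.

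\emph{The conformal bound and the lower bound on the scalar curvature.} While the flow exists and $t\le T_0\le 1$, Propositions~\ref{Sec2: low YF} and~\ref{Sec2: up YF} give $c(n,\Lambda,p_0)\le u(\cdot,t)\le C(n,\Lambda,p_0)\,t^{-\alpha}$ for a fixed admissible $\alpha<1$; restricting to $t\ge a$ yields $c\le u\le Ca^{-\alpha}$, i.e., $\lambda^{-1}h\le g(t)\le \lambda h$ with $\lambda=\lambda(a,n,\Lambda,p_0)$. For the lower bound on $R_{g(t)}$, recall that along the Yamabe flow $R=R_{g(t)}$ satisfies $\partial_t R=(n-1)\Delta_{g(t)}R+R^2$; evaluated at a spatial minimum this gives $\tfrac{d}{dt}\min_M R\ge(\min_M R)^2\ge 0$ in the barrier sense, so $\min_M R_{g(t)}\ge \min_M R_{g_0}\ge -\delta$ for all $t$.

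\emph{Interior regularity.} The remaining estimates $|\nabla^h g(t)|_h\le\lambda$ and $R_{g(t)}\le\lambda$ amount to upgrading the two-sided $L^\infty$ bound on $u$ to control of $\nabla^h u$ and of $\partial_t u$, and this is the crux of the proof. On $M\times[a/2,T_0\wedge T_{\max})$ the function $w:=u^N$ solves the divergence-form equation $\partial_t w=a_n\operatorname{div}_h(w^{1/N-1}\nabla^h w)-b_n R_h w^{1/N}$ for dimensional constants $a_n,b_n>0$, which is uniformly parabolic (since $w$ is two-sided bounded) with bounded zeroth order term (using $|R_h|\le n\Lambda$). De Giorgi--Nash--Moser iteration, using the Sobolev inequality with $C_S=C_S(n,\Lambda)$, yields a uniform $C^{0,\gamma}$ bound for $w$, hence for $u$. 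To proceed one works in harmonic coordinates: assumption {\bf (A)} gives, by Anderson's harmonic radius estimate, a controlled atlas in which $h$ is bounded in $C^{1,\gamma}\cap W^{2,p}$; there the equation becomes linear with Hölder (indeed $W^{1,p}$) coefficients, so parabolic Schauder and $L^p$ theory give uniform bounds for $\nabla^h u$ and for the second space-derivatives and $\partial_t u$ of $u$ on $M\times[a,T_0\wedge T_{\max})$. The bound on $\nabla^h u$ gives $|\nabla^h g(t)|_h\le C(n)\,u^{\frac{4}{n-2}-1}|\nabla^h u|_h\le\lambda$; and since $R_{g(t)}=-\tfrac{4}{n-2}\partial_t\log u$ by \eqref{evo-u}, the bound on $\partial_t u$ together with $u\ge c$ gives $R_{g(t)}\le\lambda$. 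I expect this regularity step to be the main obstacle — obtaining $C^2$-type control on $u$ with constants governed by assumption {\bf (A)} rather than by uncontrolled higher derivatives of $h$; a helpful partial fact is that $R_{g(t)}\ge-\delta$ is, by \eqref{Sec2: inequality eqn 5}, equivalent to the one-sided bound $\Delta_h u\le\tfrac{n-2}{4(n-1)}(R_h u+\delta u^N)\le C$, so it remains only to bound $\Delta_h u$ from below.

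\emph{Existence up to $T_0$.} If $T_{\max}\le T_0$, the bounds above hold on $M\times[\tfrac{1}{2}T_{\max},T_{\max})$, so $u(\cdot,t)$ converges as $t\to T_{\max}^-$ to some $u_*$, in $C^{1,\gamma'}$ and with $u_*\ge c>0$, and $u_*$ carries the corresponding second-order (Sobolev/Hölder) bounds; restarting \eqref{evo-u} from $u_*$ extends the solution past $T_{\max}$, contradicting maximality. Hence $T_{\max}>T_0$, the flow exists on $[0,T_0]$, and \eqref{Sec2: YFC1-estimates eqn 3} holds on $M\times[a,T_0]$ for every $a\in(0,T_0)$.
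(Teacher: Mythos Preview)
Your overall architecture matches the paper's: set $T_0=\min(\hat T,1)$, combine Propositions~\ref{Sec2: low YF} and~\ref{Sec2: up YF} for the two-sided bound on $u$, get a parabolic H\"older estimate, upgrade via $L^p$ theory in harmonic coordinates, and use the evolution equation for $R$ with the minimum principle for the lower bound. The paper uses Krylov--Safonov on the nondivergence form \eqref{harmonic-YF} rather than De Giorgi--Nash--Moser on the divergence form, but this is cosmetic.

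There is, however, a genuine gap in your route to the upper bound $R_{g(t)}\le\lambda$. You assert that ``parabolic Schauder and $L^p$ theory give uniform bounds for \dots the second space-derivatives and $\partial_t u$,'' and then read off $R_{g(t)}=-\tfrac{4}{n-2}\partial_t\log u\le\lambda$. But under assumption {\bf (A)} one only has $|R_h|\le n\Lambda$; in harmonic coordinates $h\in C^{1,\beta}\cap W^{2,p}$, so the inhomogeneous term $-\tfrac{n-2}{4}R_h\,u^{(n-6)/(n-2)}$ in \eqref{harmonic-YF} is merely $L^\infty$, not H\"older. Schauder is therefore unavailable, and parabolic $L^p$ theory yields only $\partial_t u,\,D^2u\in L^p$ for all $p<\infty$, not $L^\infty$. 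Your own final remark (``it remains only to bound $\Delta_h u$ from below'') correctly identifies this as the missing piece, but you do not supply it.

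The paper closes this gap differently: from $u\in W^{2,p}$ one first gets $\|R_{g(t)}\|_{L^p}\le C$ via \eqref{Sec2: inequality eqn 5}; then one applies parabolic $L^p$ theory \emph{to the evolution equation \eqref{evo-R} for $R_{g(t)}$ itself}. That equation has $C^\delta$ leading coefficients (from the H\"older estimate on $u$ and the $C^{1,\beta}$ bound on $h$) and inhomogeneous term $R_{g(t)}^2\in L^{p/2}$ for all $p$, so the $L^p$ estimate plus Sobolev embedding gives $R_{g(t)}\in L^\infty$. This sidesteps the need for a pointwise bound on $\partial_t u$ or $\Delta_h u$.
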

\begin{proof}
In the following, all norms are with respect to metric $h$. Denote the maximal existence time of the Yamabe flow \eqref{Yamabe flow} by $\Tmax$. First, we establish the lower bound of $\Tmax$. By \cite[Lemma 2.2]{Anderson1990}, for any $\beta\in(0,1)$, the $C^{1,\beta}$ harmonic radius of $h$ is bounded from below by $r(\beta,n,\Lambda)$. Within harmonic radius, the Laplacian operator of $h$ can be expressed as $\Delta_{h} = h^{ij}\partial_i \partial_j$, and then \eqref{evo-u} can be written as
\begin{equation}\label{harmonic-YF}
\frac{\de u}{\de t} -(n-1)u^{-\frac{4}{n-2}}h^{ij}\partial_i \partial_j u = -\frac{n-2}{4} R_h u^\frac{n-6}{n-2}.
\end{equation}
Write $T_{0}=\min(\hat{T},1)$, where $\hat{T}$ is the constant in Proposition \ref{Sec2: low YF}. We will show $\Tmax>T_{0}$. If $\Tmax\leq T_{0}$, then by Proposition \ref{Sec2: low YF} and \ref{Sec2: up YF}, for any $a\in(0,\Tmax)$,
\begin{equation}\label{Sec2: YFC1-estimates eqn 4}
C^{-1}(a,n,\Lambda,p_0) \leq \inf_{M\times[\frac{a}{3},\Tmax)} u \leq \sup_{M\times[\frac{a}{3},\Tmax)} u \leq C(a,n,\Lambda,p_0).
\end{equation}
This shows the equation \eqref{harmonic-YF} is uniformly parabolic on $[\frac{a}{3},\Tmax)$ with $L^\infty$ inhomogeneous term. Since $h$ is uniformly equivalent to the Euclidean metric $g_{\mathrm{euc}}$ in the harmonic coordinate system, we obtain uniform $C^{\delta,\frac{\delta}{2}}$ estimate on a slightly smaller ball by \cite{KrylovSafonov1980} for some $\delta(a,n,\Lambda,p_0)$. The covering argument shows
\begin{equation}\label{Sec2: YFC1-estimates eqn 1}
\|u\|_{C^{\delta,\frac{\delta}{2}}(M\times[\frac{a}{2},\Tmax))} \leq C(a,n,\Lambda,p_0).
\end{equation}
Combining this with the parabolic Schauder estimates \cite{Friedman1964}, we obtain the higher order estimates of $u$ on $[a,T_{max})$, which contradicts with the definition of $\Tmax$. Then we obtain $\Tmax>T_{0}$.

Next, we establish the required estimates. The estimate $\lambda^{-1}h\leq g(t)\leq\lambda h$ follows from \eqref{Sec2: YFC1-estimates eqn 4}. Within $C^{1,\beta}$ harmonic radius, $h$ and $g_{\mathrm{euc}}$ are uniformly comparable in $C^{1,\beta}$. Using \eqref{Sec2: YFC1-estimates eqn 1}, $|R_{h}|\leq n\Lambda$, the parabolic $L^{p}$ estimate \cite{Friedman1964} and Sobolev embedding theorem, for $p>1$,
\begin{equation}\label{Sec2: YFC1-estimates eqn 2}
\|u\|_{C^{1}} \leq C(a,n,\Lambda,\delta,p_0), \quad \|u\|_{W^{2,p}} \leq C(p,a,n,\Lambda,\delta,p_0),
\end{equation}
on $[\frac{a}{2},T_{0}]$. This shows $|\nabla^h g(t)|_{h}\leq\lambda$.

The scalar curvature $R_{g(t)}$ satisfies (see \cite[Lemma 2.2]{Chow1992}),
\begin{equation}\label{evo-R}
\left(\frac{\de}{\de t} -(n-1)\Delta_{g(t)}\right) R_{g(t)}=R_{g(t)}^2.
\end{equation}
The lower bound $R_{g(t)}\geq-\delta$ follows from the minimum principle. For the upper bound of $R_{g(t)}$ on $[a,T_{0}]$, \eqref{Sec2: YFC1-estimates eqn 2} shows for $p>1$ and $t\in [\frac{a}{2},T_0]$,
\begin{equation}
\|R_{g(t)}\|_{L^{p}} \leq C(p,a,n,\Lambda,\delta,p_0).
\end{equation}
Combining this with \eqref{Sec2: YFC1-estimates eqn 1}, the equation \eqref{evo-R} has $C^{\delta}$ parabolic coefficient and $L^{\frac{p}{2}}$ inhomogeneous term for any $p>1$ in the local coordinate system. Then $R_{g(t)}\leq\lambda$ on $M\times [a,T_0]$ follows from the parabolic $L^{p}$ estimate \cite{Friedman1964} and Sobolev embedding theorem.
\end{proof}

Lastly, we will establish the weak convergence of $g(t)$ as $t\to 0$.
\begin{prop}\label{Prop: Lp}
Let $g(t)=u(t)^{\frac{4}{n-2}}h$ be a solution of the Yamabe flow \eqref{Yamabe flow} on $[0,T_{0}]$, where $T_{0}$ is the constant in Proposition \ref{Sec2: YFC1-estimates}. If
$\frac{n}{2}+\frac{2n(n+4)}{(n-2)(n+2)}<p_{0}<\infty$, then for any $\left(\frac{2p_{0}}{n}-\frac{4(n+4)}{(n-2)(n+2)}\right)^{-1}<\alpha<1$, there is a constant $C(\alpha,n,\Lambda,p_0)$ such that
\begin{equation}
\big|\mathrm{Vol}(M,g(t))-\mathrm{Vol}(M,g_0)\big| \leq Ct^{1-\alpha}.
\end{equation}
If $p_{0}=\infty$, then for any $p>0$, there is a constant $C(p,n,\Lambda)$ such that
\begin{equation}
\int_M \big|g(t)-g(0)\big|_h^p d\mu_h \leq C(t+t^{p}).
\end{equation}
\end{prop}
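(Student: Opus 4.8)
The plan is to exploit the pointwise conformal structure of the Yamabe flow. Write $g(t)=u(t)^{\frac4{n-2}}h$, so $d\mu_{g(t)}=u(t)^{\frac{2n}{n-2}}d\mu_h$ with $\frac{2n}{n-2}=N+1$; since $\frac{\partial g}{\partial t}=-R_gg$, equivalently $\frac{\partial u}{\partial t}=-\frac{n-2}4R_{g(t)}u$, the flow merely rescales $g_0$: $g(t)=e^{-\psi(\cdot,t)}g_0$ with $\psi(x,t):=\int_0^tR_{g(s)}(x)\,ds$, and the minimum principle applied to \eqref{evo-R} gives $R_{g(s)}\geq-\delta\geq-1$, so in particular $\partial_s(u^{N+1})=-\frac n2R_{g(s)}u^{N+1}\leq\frac n2u^{N+1}$ and hence $u(s)^{N+1}\leq e^{\frac n2 s}u_0^{N+1}\leq Cu_0^{N+1}$ on $M\times[0,T_0]$. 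From \eqref{evo-u} and integration by parts,
\[
\frac{d}{dt}\mathrm{Vol}(M,g(t))=-\frac{2n(n-1)}{n-2}\int_M|\nabla^{h}u|_{h}^2\,d\mu_h-\frac n2\int_MR_hu^2\,d\mu_h=-\frac n2\int_MR_{g(t)}\,d\mu_{g(t)}.
\]

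For $p_0<\infty$ the bound $\mathrm{Vol}(M,g(t))-\mathrm{Vol}(M,g_0)\leq Ct\leq Ct^{1-\alpha}$ is immediate from this identity, since the Dirichlet term is $\leq0$ and $\big|\int_MR_hu^2\big|\leq n\Lambda\int_Mu^2\leq C$ because $p_0>2$ (Step 1 of Proposition \ref{Sec2: up YF}). For the reverse inequality I would integrate the displayed identity in time, so that the claim reduces to the estimate $\int_0^t\int_M|\nabla^{h}u(s)|_{h}^2\,d\mu_h\,ds\leq Ct^{1-\alpha}$, call it $(\star)$; equivalently, using $\mathrm{Vol}(M,g_0)-\mathrm{Vol}(M,g(t))=\frac n2\int_0^t\int_MR_{g(s)}\,d\mu_{g(s)}\,ds$ together with $u(s)^{N+1}\leq Cu_0^{N+1}$, it suffices to show $\int_0^t\int_MR_{g(s)}^+u_0^{N+1}\,d\mu_h\,ds\leq Ct^{1-\alpha}$.

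This last estimate is the heart of the matter. By \eqref{Sec2: inequality eqn 5}, $R_{g(s)}u(s)^N=R_hu(s)-\frac{4(n-1)}{n-2}\Delta_hu(s)$, so $R_{g(s)}^+u(s)^N\leq n\Lambda u(s)+\frac{4(n-1)}{n-2}|\Delta_hu(s)|$; inserting the two-sided control $c\leq u(s)\leq Cs^{-\alpha}$ from Propositions \ref{Sec2: low YF} and \ref{Sec2: up YF}, and applying H\"older's inequality against the $L^{p_0}$-bound on $u_0$ (using $p_0>N+1$), one finds $\int_MR_{g(s)}^+u_0^{N+1}\,d\mu_h\leq Cs^{-\alpha}+C\|\Delta_hu(s)\|_{L^{r}(M,h)}$ for a suitable $r>1$ (determined by $p_0$ and the power of $u_0$ one inserts), and the first term integrates to $Ct^{1-\alpha}/(1-\alpha)$. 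What then remains is a weighted parabolic regularity estimate — an $L^{r}$-in-space control on $\Delta_hu(s)$ with an $L^1$-in-time, order-$s^{-\alpha}$ singularity — which I expect to obtain by continuing the iteration of Steps 1–2 of Proposition \ref{Sec2: up YF}: test \eqref{evo-u} against further powers of $u$ and combine the space–time bound $\int_0^{T_0}\int_Mu^{\frac{(n+2)p_0}n-N+1}\,d\mu_h\,ds\leq C$ with the pointwise decay $u(\cdot,s)\leq Cs^{-\alpha}$ and the lower bound $u\geq c$. It is exactly here that the hypotheses $p_0>\frac n2+\frac{2n(n+4)}{(n-2)(n+2)}$ and $\alpha>\big(\frac{2p_0}n-\frac{4(n+4)}{(n-2)(n+2)}\big)^{-1}$ enter, and the hard part will be making the exponent bookkeeping close uniformly over all $n>2$ — in particular for small $n$, where $N=\frac{n+2}{n-2}$ is large and forces a relatively large $r$ — and for $\alpha$ near the top of its range; this is the main obstacle.

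For $p_0=\infty$ the situation is much softer. Since $\sup_Mu_0\leq\Lambda$, the proof of Lemma \ref{Sec2: inequality} (its estimate \eqref{Sec2: inequality eqn 4}) already gives $\int_M|\nabla^{h}w_0|_{h}^2\,d\mu_h\leq C$ for $w_0=\log u_0-\frac1{\mathrm{Vol}(M,h)}\int_M\log u_0\,d\mu_h$, whence $\nabla^{h}u_0=u_0\nabla^{h}w_0$ and $u_0\leq\Lambda$ yield $u_0\in W^{1,2}(M,h)$ with finite Yamabe energy $\mathcal E_0:=\frac{4(n-1)}{n-2}\int_M|\nabla^{h}u_0|_{h}^2+\int_MR_hu_0^2\leq C$. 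Along the flow $\mathcal E(s):=\int_MR_{g(s)}\,d\mu_{g(s)}$ satisfies $\mathcal E'(s)=-\frac{n-2}2\int_MR_{g(s)}^2\,d\mu_{g(s)}$ by \eqref{evo-R}, so $\mathcal E$ is nonincreasing and, by the standard energy inequality for the Yamabe flow with $W^{1,2}$ initial data, $\mathcal E(s)\leq\mathcal E_0\leq C$; also $\mathcal E(s)\geq-\delta\,\mathrm{Vol}(M,g(s))\geq-C$. By Propositions \ref{Sec2: low YF} and \ref{Sec2: up YF} one has $c\leq u\leq C$ on $M\times[0,T_0]$, so $d\mu_{g(s)}$ and $d\mu_h$ are uniformly comparable; since $\int_MR_{g(s)}^+\,d\mu_{g(s)}=\mathcal E(s)+\int_MR_{g(s)}^-\,d\mu_{g(s)}\leq C$, it follows that $\sup_{s\in[0,T_0]}\int_M|R_{g(s)}|\,d\mu_h\leq C$, hence $\int_0^t\int_M|R_{g(s)}|\,d\mu_h\,ds\leq Ct$. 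Finally $g(t)-g(0)=\big(u(t)^{\frac4{n-2}}-u_0^{\frac4{n-2}}\big)h$, and since $x\mapsto x^{\frac4{n-2}}$ is Lipschitz on $[c,C]$ and $|u(t)-u_0|\leq\frac{n-2}4\int_0^t|R_{g(s)}|u(s)\,ds\leq C\int_0^t|R_{g(s)}|\,ds$, one gets $\|g(t)-g(0)\|_{L^1(M,h)}\leq Ct$; interpolating with the uniform bound $\|g(t)-g(0)\|_{L^\infty}\leq C$ gives $\int_M|g(t)-g(0)|_{h}^p\,d\mu_h\leq C^{p-1}\|g(t)-g(0)\|_{L^1}\leq Ct$ for $p\geq1$, and for $0<p<1$ H\"older's inequality gives $\int_M|g(t)-g(0)|_{h}^p\,d\mu_h\leq\big(\int_M|g(t)-g(0)|_{h}\,d\mu_h\big)^p\mathrm{Vol}(M,h)^{1-p}\leq Ct^p$, so in all cases the bound $C(t+t^p)$ holds. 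The measured Gromov–Hausdorff and $L^p$ convergence asserted in the theorems then follow by combining this with the regularity on $[a,T_0]$ from Proposition \ref{Sec2: YFC1-estimates}, as carried out in Section \ref{stability-T}.
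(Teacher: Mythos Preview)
Your $p_0=\infty$ argument is correct and close in spirit to the paper's, though you take a slight detour through the monotonicity of the Yamabe energy $\mathcal{E}(s)=\int_M R_{g(s)}\,d\mu_{g(s)}$ to bound $\int_M|R_{g(s)}|\,d\mu_h$. The paper gets the same $L^1$ bound on $R_{g(s)}$ more directly: since $R_{g(s)}\geq -1$ and $u\geq c$, one has $\int_M|R_{g(s)}|\,d\mu_h\leq \int_M(R_{g(s)}+1)\,d\mu_h+C\leq C\int_M u^N(R_{g(s)}+1)\,d\mu_h+C$, and then the identity $u^N R_{g(s)}=R_h u-\frac{4(n-1)}{n-2}\Delta_h u$ makes the Laplacian integrate to zero. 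Either route gives $\int_M|R_{g(s)}|\,d\mu_h\leq C$, and your interpolation to get the $L^p$ bound is identical to the paper's.

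For $p_0<\infty$, however, there is a genuine gap: you correctly identify the upper bound as easy but then reduce the lower bound to an $L^r$ estimate on $\Delta_h u(s)$, which you do not prove and flag as ``the main obstacle.'' This obstacle is self-imposed. The paper's argument bypasses second-order estimates on $u$ entirely. Starting from $\frac{d}{dt}\mathrm{Vol}(M,g(t))=-\frac{n}{2}\int_M u^{N+1}R_{g(t)}\,d\mu_h$, one uses $R_{g(t)}+1\geq 0$ and the pointwise bound $u\leq Ct^{-\alpha}$ to peel off a single factor of $u$:
\[
-\int_M u^{N+1}R_{g(t)}\,d\mu_h\geq -\int_M u^{N+1}(R_{g(t)}+1)\,d\mu_h\geq -Ct^{-\alpha}\int_M u^{N}(R_{g(t)}+1)\,d\mu_h.
\]
Now $\int_M u^N R_{g(t)}\,d\mu_h=\int_M\bigl(R_h u-\frac{4(n-1)}{n-2}\Delta_h u\bigr)\,d\mu_h=\int_M R_h u\,d\mu_h$, since the Laplacian integrates to zero on the closed manifold; this is bounded by $C\|u\|_{L^1}\leq C\,\mathrm{Vol}(M,g(t))^{(n-2)/(2n)}$. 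The remaining term $\int_M u^N\,d\mu_h=\int_M u^{-1}\,d\mu_{g(t)}\leq C\,\mathrm{Vol}(M,g(t))$ by the lower bound on $u$. With the already-established upper bound on $\mathrm{Vol}(M,g(t))$, this yields $\frac{d}{dt}\mathrm{Vol}(M,g(t))\geq -Ct^{-\alpha}$, which integrates to the claim. The point you are missing is that one should \emph{integrate} $\Delta_h u$ rather than estimate it pointwise; no parabolic regularity beyond Propositions~\ref{Sec2: low YF} and~\ref{Sec2: up YF} is needed.
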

\begin{proof}
When $\frac{n}{2}+\frac{2n(n+4)}{(n-2)(n+2)}<p_{0}<\infty$, along the Yamabe flow \eqref{Yamabe flow}, using $R_{g(t)}\geq-\delta\geq-1$ (see \eqref{Sec2: YFC1-estimates eqn 3}), we obtain
\begin{equation}
\frac{d}{dt}\Vol(M,g(t)) = -\frac{n}{2}\int_{M}R_{g(t)}d\mu_{g(t)} \leq \frac{n}{2}\Vol(M,g(t)).
\end{equation}
It then follows that
\begin{equation}\label{Prop3: vol-up}
\mathrm{Vol}(M,g(t)) \leq e^{\frac{nt}{2}}\mathrm{Vol}(M,g_0)
\leq \mathrm{Vol}(M,g_0)+C(n)t\mathrm{Vol}(M,g_0).
\end{equation}
By volume comparison theorem, we have $\Vol(M,h)\leq C(n,\Lambda)$. Combining this with $\|u_{0}\|_{L^{p_{0}}}\leq\Lambda$ and $p_{0}>\frac{2n}{n-2}$, we obtain
\begin{equation}\label{Prop3: vol-up 1}
\mathrm{Vol}(M,g_0) = \int_{M}u_{0}^{\frac{2n}{n-2}}d\mu_{h} \leq C(n,\Lambda,p_{0})
\end{equation}
and so
\begin{equation}\label{Prop3: vol-up 2}
\mathrm{Vol}(M,g(t)) \leq \mathrm{Vol}(M,g_0)+C(n,\Lambda,p_{0})t .
\end{equation}
It suffices to estimate the lower bound. By Proposition \ref{Sec2: low YF}, \ref{Sec2: up YF} and $R_{g(t)}\geq-\delta\geq-1$ (see \eqref{Sec2: YFC1-estimates eqn 3}),
\begin{equation}
\begin{split}
\frac{d}{dt}\mathrm{Vol}(M,g(t))
&=-\frac{n}{2}\int_M  u^{N+1}R_{g(t)}d\mu_{h}\\
&\geq -\frac{C}{t^\a} \int_M u^N(R_{g(t)}+1) d\mu_h\\
&=-\frac{C}{t^\a} \int_M u^N R_{g(t)} d\mu_h-\frac{C}{t^\a}\int_M u^{-1}d\mu_{g(t)}\\
&\geq -\frac{C}{t^\a} \int_M \left(R_hu-C(n)\Delta_h u \right) d\mu_h-\frac{C}{t^\a}\mathrm{Vol}(M,g(t))\\
&\geq -\frac{C}{t^\a} \left(\mathrm{Vol}(M,g(t))\right)^\frac{n-2}{2n}-\frac{C}{t^\a}\mathrm{Vol}(M,g(t)),
\end{split}
\end{equation}
where $C(\alpha,n,\Lambda,p_0)>0$. Using \eqref{Prop3: vol-up 1} and \eqref{Prop3: vol-up 2}, we obtain $\mathrm{Vol}(M,g(t))\leq C(n,\Lambda,p_{0})$, and then
\begin{equation}
\frac{d}{dt}\mathrm{Vol}(M,g(t)) \geq -\frac{C(\alpha,n,\Lambda,p_{0})}{t^{\alpha}}.
\end{equation}
This implies
\begin{equation}\label{Prop3: vol-low}
\mathrm{Vol}(M,g(t)) \geq \mathrm{Vol}(M,g_0) -C(\alpha,n,\Lambda,p_0)t^{1-\alpha}
\end{equation}
Combining \eqref{Prop3: vol-up 2} and \eqref{Prop3: vol-low}, we obtain
\begin{equation}
\big|\mathrm{Vol}(M,g(t))-\mathrm{Vol}(M,g_0)\big| \leq (\alpha,n,\Lambda,p_{0})t^{1-\alpha}.
\end{equation}

\bigskip

When $p_{0}=\infty$, for $t\in[0,T_{0}]$, Proposition \ref{Sec2: low YF} and \ref{Sec2: up YF} show
\begin{equation}\label{Prop3: eqn 1}
C(n,\Lambda)^{-1} \leq u(t) \leq C(n,\Lambda), \quad C(n,\Lambda)^{-1}h \leq g(t) \leq C(n,\Lambda)h.
\end{equation}
Using $R_{g(t)}\geq-\delta\geq-1$ (see \eqref{Sec2: YFC1-estimates eqn 3}), we compute
\begin{equation}
\begin{split}
\int_M|R_{g(t)}| d\mu_h \leq {} & \int_M(R_{g(t)}+1) d\mu_h+C \\
\leq {} & C\int_{M}u^{N}(R_{g(t)}+1) d\mu_h+C \\
\leq {} & C\int_{M}\left(R_{h}u-C(n)\Delta_{h}u\right)d\mu_h+C \\[1.5mm]
\leq {} & C(n,\Lambda).
\end{split}
\end{equation}
Therefore,
\begin{equation}\label{L1}
\begin{split}
\int_M \big|g(t)-g(0)\big|_h d\mu_h =& \int_M\left| \int^t_0 \frac{\de g}{\de s} ds  \right|_{h} d\mu_h \\
\leq & C\int^t_0\int_M  |R_{g(s)}|d\mu_hds \\
\leq & C(n,\Lambda)t.
\end{split}
\end{equation}
For $p\in(0,1)$, the Holder inequality shows
\begin{equation}
\int_M \big|g(t)-g(0)\big|_h^p d\mu_h \leq C\left(\int_M \big|g(t)-g(0)\big|_h d\mu_h\right)^{p} = C(p,n,\Lambda)t^{p}.
\end{equation}
For $p>1$, using \eqref{Prop3: eqn 1},
\begin{equation}
\int_M \big|g(t)-g(0)\big|_h^p d\mu_h
\leq C\int_M \big|g(t)-g(0)\big|_h d\mu_h \leq C(p,n,\Lambda)t.
\end{equation}
\end{proof}

\section{Stability on torus}\label{stability-T}
In this section, we will give the proof of Theorem~\ref{Sec1: maintheorem} and \ref{Sec1: maintheorem2}. Indeed, we will consider a slightly more general case where $M$ is a closed manifold with non-positive Yamabe invariant. Recall that 
\begin{equation}
\sigma(M)=\sup\left\{ \mathcal{Y}(M,[g]): [g] \textit{ is a conformal class of metrics on } M\right\},
\end{equation}
where 
\begin{equation}
\mathcal{Y}(M,[g_0])=\inf\left\{ \int_M R(g)\;d\mu_g: g\in[g_0],\; \mathrm{Vol}(M,g)=1\right\}.
\end{equation}

It is well known that if a smooth metric on a
compact manifold attains the Yamabe invariant and if the invariant is nonpositive, then the metric is Einstein \cite{Schoen1989}. In particular, if $g$ is a smooth metric with $R_g\geq 0$ on $M$, then $\Ric(g)\equiv 0$.
\begin{rem}\label{torus-Yama}
For $n\geq 3$, it is in general difficult to compute the Yamabe invariant. One can usually show that $\sigma(M)=0$ by proving the non-existence of metrics with positive scalar curvature. For example, Schoen-Yau \cite{SchoenYau1979,SchoenYau1979-2} and Gromov-Lawson \cite{GromovLawson1980} showed that torus cannot admit metric with positive scalar curvature and hence $\sigma(\mathbb{T}^n)=0$.
\end{rem}
 In the following, we will consider a sequence of $g_{i,0}=u_i^\frac4{n-2}h_i$ on $M^n$ with $\sigma(M)\leq 0$  where $h_i$ and $g_{i,0}$ satisfy assumptions {\bf (A)} and {\bf (B)} with $\delta=i^{-1}\rightarrow 0$ and $p_0$ sufficiently large.  Our goal is to show that $g_{i,0}$ will converge to a Ricci-flat metric on $M$ in a weak sense. Let $g_{i}(t)$ be the Yamabe flow with initial metric $g_{i,0}$.

\begin{thm}\label{Sec3: Conv-1}
Let $M^n$ be a closed manifold with $\sigma(M)\leq 0$. 
Suppose $g_{i,0}$ is a sequence of metrics on $M$ such that
\begin{enumerate}\setlength{\itemsep}{1mm}
\item[(a)] $g_{i,0}=u_i^\frac4{n-2}h_i$ for some metric $h_i$ on $M_i$ satisfying assumption {\bf (A)};
\item[(b)] $g_{i,0}$ satisfies assumption {\bf (B)} for $\delta=i^{-1}\rightarrow 0$ and $p_0>\frac{n}{2}+\frac{2n(n+4)}{(n-2)(n+2)}$.
\end{enumerate}
Then the Yamabe flow $g_{i}(t)$ with initial metric $g_{i,0}$ exists on $M\times [0,T_{0}]$, where $T_{0}(n,\Lambda,p_{0})$ is the constant in Proposition \ref{Sec2: YFC1-estimates}. Moreover, there is a sequence of diffeomorphisms $\Phi_i$ of $M$ and a Ricci-flat metric $g_\infty$ on $M$ such that after passing to subsequence, $\Phi_{i}^{*}g_{i}(t)$ converges to $g_\infty$ on $M$ in $C^{0}_{loc}(M\times (0,T_0])$.
\end{thm}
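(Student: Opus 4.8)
The plan is to run the Yamabe flow, extract a subsequential limit with the estimates of Section~\ref{estimateYamaflow}, and rigidify it using $\sigma(M)\le0$. Existence of $g_i(t)$ on $M\times[0,T_0]$ with $T_0=T_0(n,\Lambda,p_0)$ is Proposition~\ref{Sec2: YFC1-estimates}; note that all constants in Propositions~\ref{Sec2: low YF}--\ref{Prop: Lp} and in \eqref{Sec2: YFC1-estimates eqn 1}, \eqref{Sec2: YFC1-estimates eqn 2} depend only on $(n,\Lambda,p_0)$, the dependence on $\delta=i^{-1}\le1$ being harmless. Write $g_i(t)=u_i(t)^{4/(n-2)}h_i$. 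By \cite{Anderson1990}, assumption {\bf (A)} provides, uniformly in $i$, a lower bound for the $C^{1,\beta}$ harmonic radius of $h_i$ and a $W^{2,p}$ bound ($p<\infty$) in harmonic charts; so after composing with diffeomorphisms $\Phi_i$ of $M$ and passing to a subsequence, $\tilde h_i:=\Phi_i^{*}h_i\to h_\infty$ in $C^{1,\beta}\cap W^{2,p}$ for some metric $h_\infty$ on $M$. Then $\tilde u_i(t):=u_i(t)\circ\Phi_i$ solves \eqref{evo-u} relative to $\tilde h_i$. Proposition~\ref{Sec2: low YF} gives $\tilde u_i\ge c(n,\Lambda,p_0)>0$ on $M\times[0,T_0]$, and \eqref{Sec2: YFC1-estimates eqn 3}, \eqref{Sec2: YFC1-estimates eqn 1}, \eqref{Sec2: YFC1-estimates eqn 2} give, uniformly in $i$, $\tilde u_i\le C(a)$ together with uniform $C^1$ and parabolic H\"older bounds on $M\times[a,T_0]$ for every $a\in(0,T_0)$. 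By Arzel\`a--Ascoli and a diagonal argument, after a further subsequence $\tilde u_i\to u_\infty$ in $C^0_{loc}(M\times(0,T_0])$ with $0<c\le u_\infty\le C(a)$ on $M\times[a,T_0]$ and $\nabla\tilde u_i(t)\rightharpoonup\nabla u_\infty(t)$ weakly-$*$ in $L^\infty(M)$. Setting $g_\infty(t)=u_\infty(t)^{4/(n-2)}h_\infty$, one obtains $\Phi_i^{*}g_i(t)=\tilde u_i(t)^{4/(n-2)}\tilde h_i\to g_\infty(t)$ in $C^0_{loc}(M\times(0,T_0])$; it remains to show $g_\infty(t)$ is independent of $t$ and Ricci-flat.

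Next I would pass to the limit in the weak formulations. Testing \eqref{evo-u} for $\tilde u_i$ against $\psi\in C_c^\infty(M\times(0,T_0))$, integrating by parts in $x$ and $t$, and sending $i\to\infty$ --- using $d\mu_{\tilde h_i}\to d\mu_{h_\infty}$ and $\langle\cdot,\cdot\rangle_{\tilde h_i}\to\langle\cdot,\cdot\rangle_{h_\infty}$ uniformly, $R_{\tilde h_i}\rightharpoonup R_{h_\infty}$ weakly in $L^p$, $\tilde u_i\to u_\infty$ in $C^0_{loc}$, and $\nabla\tilde u_i\rightharpoonup\nabla u_\infty$ weakly-$*$ in $L^\infty_{loc}$ --- shows that $u_\infty$ is a weak solution of \eqref{evo-u} relative to $h_\infty$ on $M\times(0,T_0]$. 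Likewise, rewriting $R_{g_i(t)}\ge-\delta_i$ as $\tfrac{4(n-1)}{n-2}\Delta_{\tilde h_i}\tilde u_i-R_{\tilde h_i}\tilde u_i\le\delta_i\tilde u_i^{N}$, testing against a fixed $t$ and $0\le\varphi\in C^\infty(M)$, and passing to the limit (the right side tending to $0$), one gets
\begin{equation*}
\int_M\Big(\tfrac{4(n-1)}{n-2}\langle\nabla u_\infty,\nabla\varphi\rangle_{h_\infty}+R_{h_\infty}u_\infty\varphi\Big)\,d\mu_{h_\infty}\ \ge\ 0\qquad\text{for all }\varphi\ge0,
\end{equation*}
i.e.\ the conformal Laplacian $L_{h_\infty}:=-\tfrac{4(n-1)}{n-2}\Delta_{h_\infty}+R_{h_\infty}$ satisfies $L_{h_\infty}u_\infty\ge0$ weakly, so $g_\infty(t)$ has weakly nonnegative scalar curvature.

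Finally, for the rigidity: since $\mathcal{Y}(M,[h_\infty])\le\sigma(M)\le0$, the first eigenvalue $\lambda_1$ of the self-adjoint elliptic operator $L_{h_\infty}$ ($C^{1,\beta}$ leading coefficient, $L^p$ potential) is $\le0$, with a positive first eigenfunction $\phi_1\in W^{2,p}(M)$. Using $\phi_1>0$ as a test function in $L_{h_\infty}u_\infty\ge0$,
\begin{equation*}
0\ \le\ \int_M\phi_1\,L_{h_\infty}u_\infty\,d\mu_{h_\infty}\ =\ \int_M u_\infty\,L_{h_\infty}\phi_1\,d\mu_{h_\infty}\ =\ \lambda_1\int_M u_\infty\phi_1\,d\mu_{h_\infty},
\end{equation*}
and $u_\infty,\phi_1>0$ force $\lambda_1=0$, $\mathcal{Y}(M,[h_\infty])=0$; then $\int_M\phi_1\,L_{h_\infty}u_\infty\,d\mu_{h_\infty}=0$, $L_{h_\infty}u_\infty\ge0$ in $L^p$ and $\phi_1>0$ give $L_{h_\infty}u_\infty\equiv0$, i.e.\ $R_{g_\infty(t)}\equiv0$ for every $t\in(0,T_0]$. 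Feeding this into the weak form of \eqref{evo-u} yields $\partial_t(u_\infty^{N})=-\tfrac{n+2}{4}u_\infty^{N}R_{g_\infty(t)}=0$, so $u_\infty$, hence $g_\infty(t)=:g_\infty$, is independent of $t$. Thus $g_\infty$ is scalar-flat and its conformal class attains $\sigma(M)\le0$; elliptic regularity (bootstrapping in $g_\infty$-harmonic coordinates) together with the Einstein rigidity in the resolution of the Yamabe problem \cite{Schoen1989} then shows $g_\infty$ is smooth and Einstein, hence Ricci-flat. I expect the main difficulty to lie in the low-regularity passages to the limit of the second step, where only $C^{1,\beta}\cap W^{2,p}$ control of $h_\infty$ is available, and in making the closing rigidity and regularity argument rigorous for the weak limit $g_\infty$; the remainder is bookkeeping with the estimates of Section~\ref{estimateYamaflow}.
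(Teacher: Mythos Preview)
Your overall strategy --- take a direct weak $C^{0}_{\mathrm{loc}}$ limit $g_\infty(t)=u_\infty(t)^{4/(n-2)}h_\infty$ of the Yamabe flows and then rigidify it --- is genuinely different from the paper's. The weak-limit passages are essentially fine: Anderson's compactness yields $h_\infty$ only in $C^{1,\beta}\cap W^{2,p}$ (all $p<\infty$), but that is enough to pass both the Yamabe flow equation and the inequality $L_{h_\infty}u_\infty\ge0$ to the limit as you indicate, and to run the eigenvalue argument giving $\lambda_1(L_{h_\infty})=0$ and $R_{g_\infty(t)}\equiv0$ (though the line ``$\mathcal Y(M,[h_\infty])\le\sigma(M)$'' should be justified by passing $\lambda_1(L_{h_i})\le0$ to the limit rather than by the definition of $\sigma$, since $h_\infty$ is not known to be smooth and $\sigma$ is a supremum over smooth conformal classes).

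The genuine gap --- which you yourself flag --- is the closing step. From $R_{g_\infty}\equiv0$ and ``$[g_\infty]$ attains $\sigma(M)$'' you want $\mathrm{Ric}(g_\infty)=0$ via Schoen's rigidity, after ``bootstrapping in $g_\infty$-harmonic coordinates.'' But there is nothing to bootstrap: the only equation available is the single scalar constraint $R_{g_\infty}=0$ (equivalently $L_{h_\infty}u_\infty=0$), which is underdetermined for the metric and can raise $u_\infty$ only to the regularity that $h_\infty$ permits --- and under a two-sided Ricci bound alone $h_\infty$ is generically no better than $C^{1,\beta}\cap W^{2,p}$. Schoen's argument, on the other hand, varies the metric \emph{across} conformal classes (in the direction of its trace-free Ricci tensor) and compares Yamabe constants; this needs at least a $C^2$ metric, which you do not have. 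The paper identifies exactly this obstruction (``since $h_i$ is only mildly regular, it is not clear to us whether $g_\infty(t)$ exists smoothly'') and bypasses it by running \emph{Ricci flow} from the smooth metrics $g_i(T_0)$: these flows exist with uniform curvature decay by \cite{ShiTam2018,Simon2002}, converge in $C^\infty_{\mathrm{loc}}$ by Hamilton's compactness to a smooth Ricci flow with $R\ge0$, which on a manifold with $\sigma(M)\le0$ must be a static Ricci-flat metric $g_\infty$; finally $g_i(t)\to g_\infty$ for all $t\in(0,T_0]$ is deduced from integral scalar-curvature estimates along both flows. Without a comparable smoothing mechanism your argument does not close.
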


\begin{proof}
In the proof, for notational convenience all convergent sequence means convergent subsequence. By \cite{Anderson1990}, we can find a sequence of diffeomorphism $\Phi_i$ such that $\Phi_i^*h_i$ converges to some $C^{1,\beta}$ metric $h_\infty$ on $M$ in $C^{1,\gamma}$ for all $\gamma<\beta<1$ after passing to subsequence. 
For notational convenience, we will pull-back all $g_{i,0}$ by $\Phi_i$ and omit $\Phi_i^*$.

Applying Proposition \ref{Sec2: YFC1-estimates} to each $g_{i}(t)$, we obtain a sequence of Yamabe flows $g_i(t)$ on $M^{n}\times[0,T_{0}]$ which is uniform bounded in $C^1$ on any $[a,T_{0}]\subset(0,T_{0}]$. Our goal is to show that $g_i(t)$ converges to a Ricci-flat metric on $M$ weakly. In case $h_i$ is uniformly regular in $C^\infty$, it is not difficult to see that $g_i(t)$ converges to a limiting solution of the Yamabe flow $g_\infty(t)$ for $t>0$ after passing to subsequence. Since $h_i$ is only mildly regular, it is not clear to us whether $g_\infty(t)$ exists smoothly although the flow is expected to be static. Instead, we will regularize it further using the Ricci flow. 

\begin{claim}\label{Sec3: YF-limit1}
There is a Ricci-flat metric $g_{\infty}$ on $M$ such that $g_{i}(T_0)\rightarrow g_{\infty}$ in $C^0(M)$ as $i\rightarrow\infty$ after passing to subsequence and pulling back by a sequence of diffeomorphism.
\end{claim}
\begin{proof}[Proof of Claim~\ref{Sec3: YF-limit1}]
Since $g_i(T_0)$ is uniformly bounded in $C^1$ with respect to $h_i$, $g_i(T_0)$ is uniformly equivalent to $h_i$ and $h_i\rightarrow h_\infty$ in $C^{1,\gamma}$. By passing to subsequence, we may assume $g_i(T_0)\rightarrow g_\infty(T_0)$ in $C^{\gamma}$ for all $\gamma\in(0,1)$. In particular, for $\ve>0$ sufficiently small, there is $N\in \mathbb{N}$ such that for all $i>N$,
\begin{equation}
(1-\e)\bar h\leq g_i(T_0)\leq (1+\e) \bar h,\quad |\nabla^{\bar h}g_i(T_0)|_{\bar h} \leq C(\bar{h},n,\Lambda,p_{0}),
\end{equation}
where $\bar h=g_N(T_0)$. Note that the constant $C$ depends on $\bar h$, but is independent of $i$. In the following, all constants may possibly depend on $\bar h$, but are independent of $i$.

Let $\tilde g_i(s)$ be Ricci flow starting from $g_{i}(T_{0})$. By \cite[Lemma 4.3]{ShiTam2018} with $\delta=0$ (see also \cite{Simon2002}), there is a constant $S_{0}(\bar{h},n,\Lambda,p_{0})>0$ such that $\tilde g_i(s)$ exists on $M^{n}\times[0,S_{0}]$ and satisfies
\begin{equation}\label{Sec3: Conv-1 eqn 1}
|\Rm(\tilde g_i(s))|\leq C(\bar{h},n,\Lambda,p_{0})s^{-1/2}
\end{equation}
provided that $\e$ is sufficiently small depending only on $n$. In particular, $\tilde g_i(s)$ is uniformly equivalent to $\tilde g_i(0)$ and hence $h_i$ for all $s\in [0,S_0]$ by integration on time. Here we have used the estimate of the Ricci-Deturck flow with reference metric $\bar h$ from \cite[Lemma 4.3]{ShiTam2018} and the fact that the Ricci-Deturck flow is diffeomorphic to the Ricci flow with the same initial data.


Since $\mathrm{inj}(h_i)$ is bounded from below, we have a uniform lower bound on $\mathrm{inj}(\tilde g_i(s))$ thanks to the uniform equivalence of metrics. Together with Shi's estimates \cite{Shi1989} (see also \cite[Theorem 4.3]{Simon2002}) and Hamilton's compactness \cite{Hamilton1995}, we can pass $\tilde g_{i}(s)\rightarrow \tilde g_\infty(s)$ in $C^\infty$ Cheeger-Gromov sense on $M\times (0,S_{0}]$ after passing to subsequence. More precisely, there is a sequence of diffeomorphism $\Psi_i$ of $M$ such that $\Psi_i^*\tilde g_i(s)$ converges to $g_\infty(s)$ in $C^\infty_{loc}\left(M\times (0,S]\right)$. As usual, we will pull-back each metrics by  $\Psi_i$ and therefore will omit $\Psi_i^*$. We note that in this way, $\{\Psi_i^*h_i\}_{i=1}^\infty$ are not necessarily compact in $C^{1,\gamma}$ topology anymore due to the additional pull-back of diffeomorphism but it will remain uniformly equivalent to the Ricci flow $\tilde g_i(s)$ thanks to the curvature estimates in \eqref{Sec3: Conv-1 eqn 1}.

It is well-known that Ricci flow preserved the lower bound of the scalar curvature, we have
\begin{equation}\label{Sec3: Conv-1 eqn 2}
R_{\tilde g_i(s)} \geq R_{\tilde g_i(0)} = R_{g_i(T_{0})} \geq -i^{-1},
\end{equation}
where we used \eqref{Sec2: YFC1-estimates eqn 3} in the last inequality. Letting $i\rightarrow\infty$, we obtain $R_{\tilde g_\infty(s)}\geq 0$ for $s\in (0,S_{0}]$. By the assumption of $\sigma(M)\leq 0$ and uniqueness of the Ricci  flow, $\tilde g_\infty(s)\equiv g_{\infty}$ on $M$ for $s\in (0,S_{0}]$ for some Ricci-flat metric $g_\infty$. Combining \eqref{Sec3: Conv-1 eqn 1} and the equation of Ricci-flow 
\begin{equation}
\frac{\de \tilde{g}_{kl}}{\de s} =-2\Ric(\tilde{g})_{kl}
\end{equation}
we obtain
\begin{equation}
\big|\tilde{g}_{i}(s)-g_{i}(T_{0})\big|_{\tilde g_i(S_0)}
= \big|\tilde{g}_{i}(s)-\tilde{g}_{i}(0)\big|_{\tilde g_i(S_0)} \leq C(\bar{h},n,\Lambda,p_{0})s^{\frac{1}{2}}.
\end{equation}
Combining this with $\tilde g_{i}(s)\rightarrow g_{\infty}$ in $C^{\infty}_{\mathrm{loc}}$ on $(0,S_{0}]$, we conclude $g_{i}(T_0)=\tilde g_i(0)\rightarrow g_{\infty}$ in $C^0(M)$ as $i\rightarrow\infty$.
\end{proof}

Next, we claim that $g_i(t)$ converges to the same Ricci-flat metric $g_{\infty}$.
\begin{claim}\label{Sec3: YF-limit}
For all $t\in (0,T_{0}]$, $g_{i}(t)\rightarrow g_{\infty}$ in $C^0$ as $i\rightarrow\infty$, where $g_{\infty}$ is the Ricci-flat metric obtain from Claim~\ref{Sec3: YF-limit1}.
\end{claim}
\begin{proof}
By Proposition \ref{Sec2: YFC1-estimates}, for $a\in (0,T_{0}]$, $g_i(t)$ is uniformly equivalent to $g_i(T_0)$ on $[a,T_{0}]$ and $-i^{-1}\leq R_{g_{i}(t)}\leq\lambda$. Using \eqref{evo-R}, we compute
\begin{equation}
\begin{split}
 \frac{\de}{\de t}\left(\int_{M} R_{g_{i}(t)}d\mu_{g_{i}(t)}\right) =& \left(1-\frac{n}{2}\right)\int_{M} R^2_{g_{i}(t)} d\mu_{g_i(t)}\\
\geq {} & -C(a,n,\Lambda,p_{0})\left(\int_{M} R_{g_{i}(t)} d\mu_{g_{i}(t)}+i^{-1}\right).
\end{split}
\end{equation}
Hence, for all $t\in [a,T_{0}]$,
\begin{equation}\label{Sec3: R-flat-YF}
\int_{M} R_{g_i(t)}d\mu_{g_i(t)}\leq C(a,n,\Lambda,p_{0})\left(\int_{M} R_{g_i(T_{0})}d\mu_{g_i(T_{0})}+i^{-1}\right).
\end{equation}
We now estimate the integral of scalar curvature on the right hand side. We will make use of the smooth convergence of the Ricci-flow. 
The scalar curvature $R_{\tilde{g}_{i}(s)}$ satisfies (see e.g., \cite[(2.5.5)]{Topping2006})
\begin{equation}
\left(\frac{\de}{\de s}-\Delta_{\tilde{g}_{i}(s)}\right)R_{\tilde{g}_{i}(s)} = 2\left|\Ric(\tilde g_i(s))\right|_{\tilde{g}_{i}(s)}^2.
\end{equation}
By \eqref{Sec3: Conv-1 eqn 1} and \eqref{Sec3: Conv-1 eqn 2},
\begin{equation}
-i^{-1}\leq R_{\tilde g_i(s)}\leq C(\bar{h},n,\Lambda,p_{0})s^{-\frac{1}{2}}.
\end{equation}
Then along the Ricci flow, we have
\begin{equation}
\begin{split}
 \frac{\de}{\de s} \left( \int_{M} R_{\tilde g_i(s)}d\mu_{\tilde g_i(s)}\right)
= &\int_{M}\left(2\left|\Ric(\tilde g_i(s))\right|_{\tilde g_i(s)}^2-|R_{\tilde g_i(s)}|^2\right)d\mu_{\tilde g_i(s)}\\
\geq  & -C(a,\bar h,n,\Lambda,p_{0})s^{-\frac{1}{2}}\left(\int_{M} R_{\tilde g_i(s)}d\mu_{\tilde g_i(s)}+i^{-1}\right).
\end{split}
\end{equation}
This shows
\begin{equation}
\begin{split}
 \int_{M} R_{g_i(T_{0})}d\mu_{g_i(T_{0})} =& \int_{M} R_{\tilde g_i(0)}d\mu_{\tilde g_i(0)} \\
\leq {} & C(a,\bar h,n,\Lambda,p_{0})\left(\int_{M} R_{\tilde g_i(S_{0})}d\mu_{\tilde g_i(S_{0})}+i^{-1}\right).
\end{split}
\end{equation}
Using \eqref{Sec3: R-flat-YF}, for all $t\in [a,T_{0}]$,
\begin{equation}
\int_{M} R_{g_i(t)}d\mu_{g_i(t)}
\leq C(a,\bar{h},n,\Lambda,p_0) \left(\int_{M} R_{\tilde g_i(S_{0})}d\mu_{\tilde g_i(S_{0})}+i^{-1}\right).
\end{equation}
Combining this with $R_{g(t)}\geq-i^{-1}$ (see \eqref{Sec2: YFC1-estimates eqn 3}), we have
\begin{equation}
\int_{M}|R_{g_i(t)}|d\mu_{g_i(t)}
\leq C(a,\bar{h},n,\Lambda,p_0) \left(\int_{M} R_{\tilde g_i(S_{0})}d\mu_{\tilde g_i(S_{0})}+i^{-1}\right).
\end{equation}
By the similar computation of \eqref{L1} and the fact that $g_i(t)$ is uniformly equivalent to $h_i$ and hence $\tilde g_i(S_0)$, we have 
\begin{equation}
\begin{split}
\int_{M} \big|g_i(t)-g_i(T_{0})\big|_{g_i(T_0)}d\mu_{g_i(T_0)} \leq & C\int_{a}^{T_{0}} \int_M |R_{g_i(s)}| d\mu_{g_i(T_0)}ds \\
\leq {} & C(a,\bar{h},n,\Lambda,p_0) \left(\int_{M} R_{\tilde g_i(S_{0})}d\mu_{\tilde g_i(S_{0})}+i^{-1}\right).
\end{split}
\end{equation}
Using Claim \ref{Sec3: YF-limit1}, the smooth convergence of $\tilde g_{i}(S_{0})$ to $g_{\infty}$ and the fact that $\Ric(g_\infty)=0$, we see that $g_{i}(t)\rightarrow g_{\infty}$ in $L^1(M,g_\infty)$ as $i\rightarrow\infty$ for all $t\in [a,T_{0}]$. By the uniform $C^1$ estimates from Proposition \ref{Sec2: YFC1-estimates}, we can improve the convergence to $C^0$. This completes the proof since $a\in(0,T_{0}]$ is arbitrary.
\end{proof}
\end{proof}

We now prove the stability Theorem for manifolds with $\sigma(M)\leq 0$. This will imply Theorem \ref{Sec1: maintheorem} and \ref{Sec1: maintheorem2} when $M=\mathbb{T}^n$ by Remark~\ref{torus-Yama} as Ricci-flat metric on torus is flat from the splitting Theorem of Cheeger-Gromov \cite{CheegerGromov1971}.
\begin{cor}
Under the assumption in Theorem \ref{Sec3: Conv-1}, if $\mathrm{diam}(M,g_{i,0})$ are uniformly bounded from above, then after passing to subsequence, $g_{i,0}$ converges to a Ricci-flat metric $g_{\infty}$ in the volume preserving intrinsic flat sense. 
\end{cor}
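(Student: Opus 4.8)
The plan is to combine Theorem~\ref{Sec3: Conv-1} with the stability theory for intrinsic flat convergence under uniform control of the flow for positive time. First I would fix the Yamabe flow $g_i(t)$ with initial data $g_{i,0}$ and, after passing to the subsequence provided by Theorem~\ref{Sec3: Conv-1}, pull back by the diffeomorphisms $\Phi_i$ so that $\Phi_i^*g_i(t)\to g_\infty$ in $C^0_{loc}(M\times(0,T_0])$ with $g_\infty$ Ricci-flat (hence flat, being on $\mathbb{T}^n$, by Cheeger–Gromov). The strategy is a triangle-inequality argument for the volume-preserving intrinsic flat distance $d_{\mathcal{VF}}$: for each fixed small $t>0$,
\begin{equation*}
d_{\mathcal{VF}}\big((M,g_{i,0}),(M,g_\infty)\big) \leq d_{\mathcal{VF}}\big((M,g_{i,0}),(M,g_i(t))\big) + d_{\mathcal{VF}}\big((M,g_i(t)),(M,g_\infty)\big),
\end{equation*}
and then I would let $i\to\infty$ first and $t\to 0$ afterwards. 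For the second term, the uniform $C^0$ convergence $\Phi_i^*g_i(t)\to g_\infty$ together with the uniform two-sided bounds $\lambda^{-1}h\le g_i(t)\le\lambda h$ from Proposition~\ref{Sec2: YFC1-estimates} gives, for fixed $t$, that $(M,g_i(t))\to(M,g_\infty)$ in the Gromov–Hausdorff sense and with volumes converging; since these are smooth metrics on a fixed manifold converging uniformly, the intrinsic flat convergence (indeed volume-preserving) follows from the standard fact that $C^0$-close metrics on a compact manifold are $\mathcal{VF}$-close (e.g.\ via the Sormani–Wenger estimates, or the result of Huang–Lee–Sormani / Allen–Perales comparing $d_{\mathcal{VF}}$ to sup-norm distance of metrics).

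The main work is controlling the first term uniformly in $i$: I need to show that $d_{\mathcal{VF}}\big((M,g_{i,0}),(M,g_i(t))\big)$ is small for $t$ small, independently of $i$. Here the key inputs from Section~\ref{estimateYamaflow} are: Proposition~\ref{Prop: Lp}, which in the $L^{p_0}$ case gives $|\mathrm{Vol}(M,g_i(t))-\mathrm{Vol}(M,g_{i,0})|\le Ct^{1-\alpha}$ uniformly; the uniform volume bounds $\Lambda^{-1}\le\mathrm{Vol}(M,g_{i,0})\le C(n,\Lambda,p_0)$ and likewise for $g_i(t)$; and the uniform diameter bound $\mathrm{diam}(M,g_{i,0})\le\Lambda$ which is now part of the hypothesis. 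The idea is to estimate the intrinsic flat distance between $g_{i,0}$ and $g_i(t)$ by building an explicit filling (a current on $M\times[0,1]$ interpolating the two metrics, as in the $d_{\mathcal{VF}}$-estimates of Sormani–Wenger / Allen–Perales): since along the Yamabe flow the metric only shrinks where $R_{g_i}>0$ and grows where $R_{g_i}<0$, and $R_{g_i(s)}\ge -i^{-1}$, one controls the ``annular'' region where $g_i(t)$ and $g_{i,0}$ differ substantially by the amount of volume lost, which by Proposition~\ref{Prop: Lp} is $O(t^{1-\alpha})$ uniformly, while on the complement the metrics are comparable and the diameters stay bounded. This yields $d_{\mathcal{VF}}\big((M,g_{i,0}),(M,g_i(t))\big)\le \Psi(t\,|\,n,\Lambda,p_0)$ with $\Psi\to 0$ as $t\to0$, uniformly in $i$.

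Combining the two estimates: given $\varepsilon>0$, choose $t$ small so the first term is below $\varepsilon$ for all $i$; then for that fixed $t$, choose $i$ large so the second term is below $\varepsilon$; hence $d_{\mathcal{VF}}\big((M,g_{i,0}),(M,g_\infty)\big)<2\varepsilon$ for all large $i$, giving volume-preserving intrinsic flat convergence of a subsequence of $(M_i,g_{i,0})$ to the flat torus $(M,g_\infty)$. I would note that $g_\infty$ is a genuine nondegenerate flat metric (not the zero current) precisely because of the uniform volume lower bound (iii) and the uniform diameter upper bound, which together with condition (ii)/(iii) rule out collapse or concentration. I expect the main obstacle to be making the uniform-in-$i$ control of $d_{\mathcal{VF}}\big((M,g_{i,0}),(M,g_i(t))\big)$ rigorous: one must produce the interpolating filling current and bound its mass and boundary mass purely in terms of $n,\Lambda,p_0$ and the volume drop $t^{1-\alpha}$, without any a priori $C^0$-closeness of $g_{i,0}$ to $g_i(t)$ (which indeed fails near $t=0$); this is exactly the delicate ``uniform convergence as $t\to0$'' point flagged at the end of the introduction, and it is handled by exploiting that the region of large metric distortion has small volume together with the bounded diameter of $(M,g_{i,0})$.
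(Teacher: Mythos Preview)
Your triangle-inequality strategy through $g_i(t)$ is more complicated than what the paper does, and the hard first term you flag is genuinely not established in your sketch. Controlling $d_{\mathcal{VF}}\big((M,g_{i,0}),(M,g_i(t))\big)$ uniformly in $i$ from volume closeness and a diameter bound alone is not a known estimate; building an interpolating filling current whose mass is controlled only by the net volume drop $t^{1-\alpha}$ does not work without some pointwise comparison of the metrics, and the vague ``region of large distortion has small volume'' claim is not justified (the volume difference is a signed quantity, not a bound on the bad set). So as written there is a gap in the first term.

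The paper sidesteps this entirely. You already noticed the key monotonicity: since $R_{g_i(s)}\geq -i^{-1}$, the Yamabe flow equation gives $g_i(t)\leq e^{i^{-1}t}g_{i,0}$, i.e.\ $g_{i,0}\geq (1-o(1))g_i(T_0)$ pointwise. Combined with the $C^0$ convergence $F_i^*g_i(T_0)\to g_\infty$ from Theorem~\ref{Sec3: Conv-1}, this yields directly $F_i^*g_{i,0}\geq (1-1/i)g_\infty$ after relabeling. Now one has exactly the hypotheses of the Allen--Perales--Sormani ``volume above distance below'' theorem \cite[Theorem~2.1]{AllenPeralesSormani2020}: a one-sided metric lower bound against a fixed limit, uniform diameter bounds, and volume convergence (the latter from Proposition~\ref{Prop: Lp} plus $\mathrm{Vol}(M,g_i(t))\to\mathrm{Vol}(M,g_\infty)$). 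That theorem then gives volume-preserving intrinsic flat convergence of $g_{i,0}$ to $g_\infty$ in one stroke, with no need to compare $g_{i,0}$ to $g_i(t)$ for small $t$ at all. The moral: the one-sided inequality you observed is strong enough to compare $g_{i,0}$ directly with $g_\infty$, not just with $g_i(t)$.
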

\begin{proof}
By \eqref{Sec2: YFC1-estimates eqn 3}, the Yamabe flow $g_i(t)$ in Theorem \ref{Sec3: Conv-1} satisfies
\begin{equation}
\frac{\de g_{i}}{\de t} = -R_{g_{i}}g_{i} \leq i^{-1}g_{i},
\end{equation}
which implies
\begin{equation}
g_i(t) \leq e^{-i^{-1}t}g_{i}(0) = e^{-i^{-1}t}g_{i,0}.
\end{equation}
In particular,
\begin{equation}
g_i(T_{0}) \leq e^{-i^{-1}T_{0}}g_{i,0}.
\end{equation}
By Theorem \ref{Sec3: Conv-1}, $F_i^*g_i(T_0)$ converges to a Ricci-flat metric $g_{\infty}$ in $C^0(M)$ where $F_i=\Phi_i\circ \Psi_i$. By relabelling the index, we may assume without loss of generality that
\begin{equation}\label{proof of maintheorem eqn 1}
\left(1-\frac1i\right)g_{\infty}\leq F_i^*g_{i,0}.
\end{equation}
The intrinsic flat convergence follows from \eqref{proof of maintheorem eqn 1}, volume convergence in Proposition \ref{Prop: Lp} and \cite[Theorem 2.1]{AllenPeralesSormani2020}.
\end{proof}

\begin{cor}[Theorem \ref{Sec1: maintheorem2}]
Under the assumption in Theorem~\ref{Sec3: Conv-1}, if $u_i$ are uniformly bounded from above, then there is a sequence of diffeomorphisms $F_i$ of $M$ such that after passing to subsequence, $F_i^*g_{i,0}$ converges to a Ricci-flat metric $g_{\infty}$ in $L^p(M,g_{\infty})$ for all $p>0$. Moreover, $g_{i,0}$ converges to $g_\infty$ in the measured Gromov-Hausdorff sense.
\end{cor}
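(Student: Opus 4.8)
The plan is to combine the weak convergence of the Yamabe flow from Theorem~\ref{Sec3: Conv-1} with the quantitative estimate near $t=0$ from Proposition~\ref{Prop: Lp}. First note that since $\sup_{M}u_i\le\Lambda$, assumption {\bf (B)} holds with every finite exponent, so Theorem~\ref{Sec3: Conv-1} applies and produces diffeomorphisms $F_i$ of $M$ and a Ricci-flat metric $g_\infty$ with $F_i^{*}g_i(t)\to g_\infty$ in $C^0_{loc}(M\times(0,T_0])$; moreover, as in the proof of Theorem~\ref{Sec3: Conv-1}, for $i$ large $F_i^{*}h_i$ is uniformly equivalent to $g_\infty$, and by Propositions~\ref{Sec2: low YF} and \ref{Sec2: up YF} (the $p_0=\infty$ case) the conformal factor satisfies $c(n,\Lambda)\le u_i\le\Lambda$, so $F_i^{*}g_i(t)$ and $g_\infty$ are uniformly equivalent for all $t\in[0,T_0]$. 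Hence I will take all norms below with respect to the fixed smooth metric $g_\infty$.

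For the $L^p$ statement I would fix $p\ge1$ (enough, since $\mu_{g_\infty}$ is finite) and $t\in(0,T_0]$, and split
\begin{equation*}
\big\|F_i^{*}g_{i,0}-g_\infty\big\|_{L^p(M,g_\infty)}\le\big\|F_i^{*}g_{i,0}-F_i^{*}g_i(t)\big\|_{L^p(M,g_\infty)}+\big\|F_i^{*}g_i(t)-g_\infty\big\|_{L^p(M,g_\infty)}.
\end{equation*}
By the uniform equivalence $F_i^{*}h_i\simeq g_\infty$ the first term is bounded by $C\|g_i(t)-g_{i,0}\|_{L^p(M,h_i)}$, which by the $p_0=\infty$ case of Proposition~\ref{Prop: Lp} is at most $C(t+t^p)^{1/p}$, uniformly in $i$, and this tends to $0$ as $t\to0$; the second term tends to $0$ as $i\to\infty$ for each fixed $t$ by the $C^0(M)$ convergence $F_i^{*}g_i(t)\to g_\infty$. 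Taking $\limsup_i$ and then letting $t\to0$ gives $F_i^{*}g_{i,0}\to g_\infty$ in $L^p(M,g_\infty)$ for every $p>0$.

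For the measured Gromov--Hausdorff convergence I would show that $d_{F_i^{*}g_{i,0}}\to d_{g_\infty}$ uniformly on $M\times M$ and $\mu_{F_i^{*}g_{i,0}}\to\mu_{g_\infty}$ weakly; by isometry invariance this gives the stated convergence of $(M,g_{i,0})$. The measure part is immediate from the $L^1$ convergence just proved, since $A\mapsto\sqrt{\det A}$ is Lipschitz on the fixed compact range of the $F_i^{*}g_{i,0}$. For the distances I would prove two one-sided bounds. The \emph{upper} bound $d_{F_i^{*}g_{i,0}}\le d_{g_\infty}+\e$ for $i$ large would come from the $L^1$ convergence and the uniform equivalence, via a Fubini/tube argument: reduce to endpoints $x,y$ joined by a short $g_\infty$-geodesic $\gamma$ inside a geodesic normal chart of the smooth metric $g_\infty$, average $\mathrm{length}_{F_i^{*}g_{i,0}}$ over a thin family of parallel translates $\gamma_v$ of $\gamma$ (so the average differs from $d_{g_\infty}(x,y)$ by at most $Cr+Cr^{-(n-1)}\|F_i^{*}g_{i,0}-g_\infty\|_{L^1}$), pick a good translate $\gamma_{v^{*}}$ and close up its endpoints using the uniform equivalence, subdivide a general $g_\infty$-geodesic into a bounded number of such short pieces, and finally send $r\to0$ after $i\to\infty$. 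The \emph{lower} bound would instead use the almost non-negativity of the scalar curvature along the flow: from $\partial_t g_i=-R_{g_i}g_i$ and $R_{g_i}\ge-\delta_i$ (see \eqref{Sec2: YFC1-estimates eqn 3}) we get $g_i(T_0)\le e^{\delta_i T_0}g_{i,0}$, hence $d_{F_i^{*}g_i(T_0)}\le e^{\delta_i T_0/2}\,d_{F_i^{*}g_{i,0}}$ on $M\times M$; since $F_i^{*}g_i(T_0)\to g_\infty$ in $C^0(M)$ the distances $d_{F_i^{*}g_i(T_0)}$ converge uniformly to $d_{g_\infty}$, so $\delta_i\to0$ gives $d_{F_i^{*}g_{i,0}}\ge d_{g_\infty}-\e$ for $i$ large. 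Combining the two bounds yields the uniform convergence of distances and hence measured Gromov--Hausdorff convergence.

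The main obstacle is the distance estimate, and in particular the lower bound. It is \emph{not} true in general that $L^p$ convergence of uniformly equivalent metrics forces convergence of the induced distances --- a conformal factor concentrated on a thin tube is a counterexample --- and precisely such a tube is ruled out by the hypothesis $R(g_{i,0})\ge-\delta_i$. This is why the lower distance bound must be read off from the (almost) monotonicity of the Yamabe flow rather than from $L^p$ convergence, while the upper bound needs the Fubini-type averaging in place of a naive length comparison along a single fixed curve.
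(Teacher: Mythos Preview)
Your $L^p$ argument is exactly the paper's: it simply says ``the $L^p$ convergence follows from Proposition~\ref{Prop: Lp} and Theorem~\ref{Sec3: Conv-1}'', and your triangle-inequality splitting with the intermediate $F_i^{*}g_i(t)$ is precisely how one unpacks that sentence.

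For the measured Gromov--Hausdorff part you take a genuinely different, more hands-on route. The paper does not prove distance convergence directly; instead it records the two-sided metric bound
\[
\left(1-\tfrac1i\right)g_\infty \le F_i^{*}g_{i,0} \le C(n,\Lambda)\,g_\infty
\]
(the lower inequality coming from the same Yamabe-flow monotonicity you use, the upper one from $u_i\le\Lambda$ and $F_i^{*}h_i\le C\,F_i^{*}g_i(T_0)\to C\,g_\infty$), and then feeds this together with the $L^p$ convergence into \cite[Theorem~1.2]{AllenSormani2020} as a black box. Your lower distance bound is identical to the paper's lower metric bound; your Fubini/tube argument for the upper distance bound is essentially a re-derivation, in this special setting, of what the Allen--Sormani theorem provides. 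So your approach is correct and self-contained, at the cost of reproving a piece of the literature; the paper's is shorter but relies on the external reference. Your closing remark that $L^p$ convergence alone cannot give the lower distance bound, and that one must use the scalar-curvature-driven monotonicity of the flow, is exactly the point encoded in the paper's inequality $(1-\tfrac1i)g_\infty\le F_i^{*}g_{i,0}$.
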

\begin{proof}
The $L^p$ convergence follows from Proposition \ref{Prop: Lp} and Theorem \ref{Sec3: Conv-1}. As shown in proof of Theorem \ref{Sec3: Conv-1}, $F_i^*h_{i}\leq C(n,\Lambda)\cdot F_i^*g_{i}(T_0)$ and $F_i^*g_{i}(T_0)$ converges to $g_\infty$ in $C^0(M)$. Combining this with \eqref{proof of maintheorem eqn 1}, we have 
\begin{equation}\label{proof of maintheorem2 eqn 1}
\left(1-\frac1i\right)g_{\infty} \leq F_i^*g_{i,0}\leq C(n,\Lambda)g_{\infty}.
\end{equation}
for $i$ sufficiently large. 
The measured Gromov-Hausdorff convergence follows from \eqref{proof of maintheorem2 eqn 1}, the $L^p$ convergence and \cite[Theorem 1.2]{AllenSormani2020}.
\end{proof}

\end{document}